\newtheorem{theorem}{Theorem}[section]
\newtheorem{lemma}[theorem]{Lemma}
\newtheorem{corollary}[theorem]{Corollary}
\newtheorem{proposition}[theorem]{Proposition}
\newtheorem{remark}[theorem]{Remark}
\newtheorem{example}[theorem]{Example}
\DeclareMathAlphabet{\mathpzc}{OT1}{pzc}{m}{it}
\newcommand{\R}{\mathbb{R}}
\newcommand{\N}{\mathbb{N}}
\newcommand{\C}{\mathbb{C}}
\newcommand{\K}{\mathbb{K}}
\def\ov{\overline}
\def\B2star{\overline{B}_X^{w(X^{\ast\ast},X^{\ast})}}
\title{On the polynomial Lindenstrauss theorem}
\author{Daniel Carando %
\and Silvia Lassalle %
\and Martin Mazzitelli }
\thanks{This project was supported in part by UBACyT W746, UBACyT X218  and CONICET PIP 0624}
\address{Departamento de Matem\'{a}tica - Pab I,
Facultad de Cs. Exactas y Naturales, Universidad de Buenos Aires,
(1428) Buenos Aires, Argentina and IMAS-CONICET}
\email{dcarando@dm.uba.ar, slassall@dm.uba.ar, mmazzite@dm.uba.ar}
\keywords{Integral formula, norm attaining multilinear and polynomials mappings, Lindentrauss type theorems}
\subjclass[2000] {Primary: 46G25, 47H60. Secondary: 46B28, 46B20}
\date{}
\begin{document}
\baselineskip=.65cm

\begin{abstract}
Under certain hypotheses on the Banach space  $X$, we show that the set of $N$-homogeneous polynomials from $X$ to any dual space, whose Aron-Berner extensions are norm attaining, is dense in the space of all continuous $N$-homogeneous polynomials. To this end we prove an integral formula for the duality between
tensor products and polynomials. We also exhibit examples of Lorentz sequence spaces  for which there is no polynomial Bishop-Phelps theorem, but our results apply. Finally we address quantitative versions,  in the sense of Bollob\'as, of these results.
\end{abstract}

\maketitle

\section*{Introduction}
The Bishop-Phelps theorem \cite{BishPhel61} states that for any
Banach space $X$, the set of norm attaining bounded linear functionals is dense in $X'$, the dual space of $X$. Since the appearance of this result in 1961,
the study of norm attaining functions has attracted the attention of many authors.  Lindenstrauss showed
that there is no Bishop-Phelps theorem for linear bounded operators \cite{Lind}. Nevertheless, he proved that the set of bounded linear operators (between any
two Banach spaces $X$ and $Y$) whose second adjoints attain their
norm, is dense in the space of all operators.  This result was later extended for multilinear operators by Acosta, Garc\'ia and Maestre \cite{AGM}.  These kinds of results are referred to as  Lindenstrauss type theorems.  It is worth mentioning that a Bishop-Phelps theorem does not hold in general even for scalar-valued bilinear forms \cite{AAP}. Moreover, Choi showed in \cite{Choi97} that there is no Bishop-Phelps theorem for scalar-valued bilinear forms on $L_1[0,1]\times L_1[0,1]$. On the other hand, Finet and Pay\'{a} ~\cite{FinPay98} proved a Bishop-Phelps theorem for operators from  $L_1[0,1]$ to $L_\infty[0,1]$. As a consequence, we see that positive results for operators from a Banach space $X$ to its dual $X'$, do not imply positive results for bilinear forms on $X\times X$.

In the context of homogeneous polynomials, where there is no Bishop-Phelps theorem either,  the symmetric structure presents an additional difficulty. In \cite{ArGM} Aron, Garc\'\i a and Maestre showed  a polynomial Lindenstrauss theorem for the case of scalar-valued $2$-homogeneous polynomials. This was extended to vector-valued 2-homogeneous polynomials by Choi, Lee and Song \cite{ChoLeeSon10}. The aim of this work is to show a polynomial Lindenstrauss theorem for arbitrary degrees.

To achieve our goal, we first present an integral representation formula for the duality between
tensor products and polynomials. Namely, if  $X$ is a Banach spaces whose dual is separable and has the approximation property, we see in Theorem~\ref{formula integral vectorial} that any element in the tensor product  $ (\tilde{\otimes}_{\pi_s}^{N,s}X) \tilde{\otimes}_{\pi} Y$ is associated to a  regular Borel measure on $(B_{X''}, w^*) \times (B_{Y''}, w^*)$, for any Banach space $Y$. This integral formula somehow extends those given in \cite{GarGreMae09, GreRya06}.  In Theorem~\ref{Lind polinomial}, we apply our integral representation to prove a Lindenstrauss theorem for homogeneous polynomials from Banach spaces $X$ satisfying the hypotheses above,  into any dual space (and, therefore, for scalar-valued homogeneous polynomials on $X$). For instance, our result is valid for Banach spaces $X$ with shrinking bases.
Preduals of Lorentz sequence spaces $d_*(w,1)$  (see Section~\ref{ejemplos en preduales de lorentz})  are typical examples of spaces in which there is no polynomial Bishop-Phelps theorem. Nevertheless, our polynomial Lindenstrauss theorem applies  since they have shrinking bases.
In particular, those spaces with $w\in\ell_2$ do not satisfy the scalar-valued polynomial Bishop-Phelps theorem for any degree $N\ge 2$, but satisfy the polynomial Lindenstrauss theorem for \emph{every} degree  (see Example~\ref{primer ejemplo}). Moreover, for many  admissible sequences $w$,  we show that there exists some $1 < r < \infty$ such that the same happens for $\ell_r$-valued polynomials on $d_*(w,1)$ of any degree $N\ge 1$ (see Proposition~\ref{no vale BP} and the subsequent comments).

Bollob\'as  \cite{Boll} showed a quantitative version of the Bishop-Phelps theorem (see Section~\ref{no vale lindenstrauss-bollobas} for details). It seems natural to wonder about the validity of the corresponding quantitative versions of Lindenstrauss  type theorems, which we  call Lindenstrauss-Bollob\'{a}s theorems. For linear operators, it is shown \cite[Example~6.3]{AAGM} that no such result holds in general.
We  see that there is no Lindenstrauss-Bollob\'as theorem in the (scalar or vector-valued)  multilinear and polynomial settings (see Propositions~\ref{no vale LB multilineal} and~\ref{no vale LB polinomial}). We remark that the bilinear scalar-valued case is not a mere translation of the counterexample exhibited in \cite{AAGM} for operators. Here, the authors follow the ideas of~\cite{Lind} to show that the theorem fails for the identity map from $X=c_0$ to $Y$, a renorming of $c_0$ such that $Y''$ is strictly convex. This example cannot be modified to obtain a counterexample for bilinear mappings. Also, our construction provides a new counterexample for the Lindenstrauss-Bollob\'{a}s theorem for operators.

For further reading on polynomials and multlinear mappings on infinite dimensional Banach spaces we refer the reader to \cite{Din} and \cite{Muj}. An excellent survey on denseness of norm attaining  mappings can be found in \cite{Aco06}, see also the references therein.

\section{Preliminaries} \label{preliminares}

 Throughout this paper $X$ denotes a Banach space, while $X'$,  $B_X$ and $S_X$ denote respectively the
topological dual, the closed unit ball and the unit sphere of $X$. For $X_1,\ldots,X_N$ and $Y$
Banach spaces,  $\mathcal{L}(X_1,\dots,X_N;Y)$ stands for the space of continuous $N$-linear
maps $\Phi\colon X_1 \times \cdots\times X_N \rightarrow Y$ endowed with the supremum norm
$$\Vert \Phi\Vert = \sup \{\Vert \Phi(x_1,\dots,x_N)\Vert: \quad x_j \in B_{X_j}, \quad 1\leq j \leq N\}.$$
If $X_1=\cdots=X_N=X$ we simply write $\mathcal{L}(^NX;Y)$.
A function $P\colon X\to Y$  is said to be a (continuous) $N$-homogeneous
polynomial if there is a (continuous) $N$-linear map $$\Phi\colon
X\times \overset{N}{\cdots}\times X \to Y$$ such that $P(x)= \Phi(x,\ldots,x)$ for all $x\in X$.
We denote by ${\mathcal P}(^NX;Y)$ the Banach space of all continuous
$N$-homogeneous polynomials from $X$ to $Y$ endowed with the supremum norm
$$\Vert P\Vert = \sup_{x \in B_X} \Vert Px\Vert.$$
A  polynomial $P$ in ${\mathcal P}(^NX;Y)$ is said to be of
finite type if there exist $\{x'_j\}_{j=1}^m$ in $X'$ and  $\{y_j\}_{j=1}^m$ in $Y$ such that
$P(x)=\sum_{j=1}^m  x'_j(x)^Ny_j$ for all $x$ in $X$.   The subspace of all finite type $N$-homogeneous polynomials  is denoted by $\mathcal{P}_f(^NX;Y)$. When $Y=\K$ is the scalar field,  $\K=\R$ or $\C$, we omit it and write for instance $\mathcal{L}(^NX)$, ${\mathcal P}(^NX)$ or  $\mathcal{P}_f(^NX)$.

We say that a linear operator $T \in \mathcal{L}(X;Y)$  attains its norm (or is norm attaining) if there exists $a \in B_{X}$ such that $\| T(a)\| = \Vert T \Vert$.
Also, a multilinear operator $\Phi$ attains its norm if there exists a $N$-tuple $(a_1, \dots, a_N) \in B_{X_1} \times \cdots\times B_{X_N}$ such that $\| \Phi(a_1,\dots,a_N)\| = \| \Phi\|$. Analogously, $P\in  \mathcal{P}(^NX;Y)$ attains its norm if there exists $a \in B_{X}$ such that $\|P(a)\| = \Vert P \Vert$.
When it is opportune we write   $\mathcal{NAP}(^NX;Y)$ to denote the set of all norm attaining $N$-homogeneous polynomials of $\mathcal{P}(^NX;Y)$.

Polynomials in $\mathcal P(^NX)$ can be considered as continuous linear functionals on the symmetric projective tensor product as follows.
Given a  symmetric tensor $u$ in $\otimes^{N,s} X$,  the symmetric projective norm $\pi_s$ of
$u$ is defined to be
$$
\pi_s(u)=\inf\Big\{\sum_{j=1}^m |\lambda_j| \|x_j\|^N\colon u=\sum_{j=1}^m\lambda_j x_j^N, (\lambda_j)_{j=1}^m \subset \K, (x_j)_{j=1}^m\subset X\Big\}.
$$
We denote the completion of ${\otimes^{N,s}} X$ with respect to $\pi_s$ by  $\tilde{\otimes}_{\pi_s}^{N,s}X$. Then,
$$
\mathcal{P}(^NX) = (\tilde{\otimes}_{\pi_s}^{N,s}X)'
$$
isometrically, where the identification is given by the duality
$$
L_P(u):=\left\langle u,P \right\rangle = \sum_{j=1}^\infty \lambda_j P(x_j),
$$
for $P \in \mathcal{P}(^NX)$ and $u \in \tilde{\otimes}_{\pi_s}^{N,s}X$,  $u = \sum_{j=1}^\infty \lambda_j x_j^N$. Also, for polynomials with values in a dual space $Y'$ we have the isometric isomorphism
\begin{equation}\label{dualidad tensores}
\mathcal{P}(^NX;Y') = \left( (\tilde{\otimes}_{\pi_s}^{N,s}X) \tilde{\otimes}_{\pi} Y \right)'.
\end{equation} Here the duality  is given by
\begin{equation}\label{classical duality}
L_P(u):=\left\langle u,P \right\rangle = \sum_{k=1}^\infty  \sum_{j=1}^\infty \lambda_{k,j} P(x_{k,j}) (y_k)
\end{equation}
for any $P \in \mathcal{P}(^NX;Y')$ and $u= \sum_{k=1}^\infty v_k \otimes y_k$, where $(y_k)_k \subset Y$ and $(v_k)_k \subset \tilde{\otimes}_{\pi_s}^{N,s}X$, with $v_k =\sum_{j=1}^\infty \lambda_{k,j} x_{k,j}^N$ for all $k$.

Recall that the canonical (Arens) extension of a multilinear function is obtained by weak-star density as follows (see \cite{Arens} and \cite[1.9]{DefFlo93}).
Given $\Phi \in \mathcal{L}(X_1,\dots,X_N;Y)$, the mapping $\overline{\Phi} : X''_1 \times\cdots\times X''_N \longrightarrow Y''$ is defined by
\begin{eqnarray}\label{arens extension}
\overline{\Phi}(x_1'',\ldots,x_N'') = w^* - \lim_{\alpha_1}\ldots\lim_{\alpha_N} \varphi(x_{1,{\alpha_1}},\ldots,x_{N,{\alpha_N}})
\end{eqnarray}
where $(x_{j,{\alpha_j}})_{\alpha_j } \subseteq X$ is a net  $w^*$-convergent to  $x''_j\in X''_j$, $j=1,\ldots, N$. For $N=1$ this recovers the definition of the bitranspose of a continuous operator.

The Aron-Berner extension \cite{AB} of a polynomial  $P \in \mathcal{P}(^NX;Y)$  is the polynomial   $\ov P \in \mathcal{P}(^NX'';Y'')$,  defined by
$\overline{P}(x'')= \overline{\Phi}(x'',\ldots ,x'')$, where $\Phi$ is the unique symmetric $N$-linear mapping associated to $P$.
We also have $\|\overline P\|=\|P\|$, see \cite{DG}.

\section{Integral representation of tensors and the polynomial Lindenstrauss theorem} \label{formula integral} \label{lindenstrauss polinomial}

As a consequence of the principle of local reflexivity, given a Banach space   $X$ whose dual $X'$ is separable and enjoys the approximation property, it is possible to find a sequence of finite rank operators $(T_n)_n$ on  $X$ such that
both  $T_n\longrightarrow Id_X$ and $T'_n\longrightarrow Id_{X'}$ in the strong operator topology \cite[p.288-289]{HandbookI}. In fact, the existence of such a sequence
is actually equivalent to  $X'$ being separable with the approximation property. Clearly, we also have
$\sup_n \Vert T_n\Vert < \infty$,
\begin{equation}\label{schauderapprox} T_n{''}(X'') \subseteq J_X(X)\quad\text{ and }\quad T_n{''}(x'') \xrightarrow[n \rightarrow \infty]{w^*}\,x'' \text{
for all } x'' \in X'',\end{equation}where $J_X:X\to X''$ is the canonical inclusion.
\begin{lemma} \label{aproximacion puntual por pol. tipo finito}
Let $X, Y$ be Banach spaces and suppose that  $X'$ is separable and has the approximation property. Then, for each polynomial $P \in \mathcal{P}(^NX;Y')$ there exists a norm-bounded multi-indexed sequence of finite type polynomials $$(P_{n_1,\dots,n_N})_{(n_1,\dots,n_N)\in\N^N}\subset \mathcal{P}_f(^NX;Y')$$ such that the Aron-Berner extension of $P$ is given by the  iterated limit
\begin{equation} \label{limite de medibles}
\overline{P}(x'')(y'') = \lim_{n_1 \rightarrow \infty} \ldots \lim_{n_N \rightarrow \infty} \overline{P_{n_1,\ldots,n_N}}(x'')(y''),
\end{equation}
for each $x'' \in X''$ and $y'' \in Y''$.
\end{lemma}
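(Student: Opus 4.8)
The plan is to approximate $P$ by precomposing its symmetric $N$-linear form with the finite-rank operators $T_n$, using an \emph{independent} index in each of the $N$ slots. Concretely, let $\Phi\in\mathcal L(^NX;Y')$ be the symmetric $N$-linear form associated with $P$ and let $(T_n)_n$ be a sequence of finite-rank operators on $X$ satisfying \eqref{schauderapprox}, with $M:=\sup_n\|T_n\|<\infty$. For $(n_1,\dots,n_N)\in\N^N$ set
\[
P_{n_1,\dots,n_N}(x):=\Phi\big(T_{n_1}x,\,T_{n_2}x,\,\dots,\,T_{n_N}x\big),\qquad x\in X .
\]
Then $P_{n_1,\dots,n_N}\in\mathcal P(^NX;Y')$ with $\|P_{n_1,\dots,n_N}\|\le\|\Phi\|\,M^N$, so the family is norm-bounded. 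Writing each $T_{n_j}=\sum_i \ell^{(n_j)}_i\otimes e^{(n_j)}_i$ as a finite sum ($\ell^{(n_j)}_i\in X'$, $e^{(n_j)}_i\in X$) expands $P_{n_1,\dots,n_N}$ into a finite sum of terms $\ell^{(n_1)}_{i_1}(x)\cdots\ell^{(n_N)}_{i_N}(x)\,\Phi(e^{(n_1)}_{i_1},\dots,e^{(n_N)}_{i_N})$, and by the polarization identity each product $\ell^{(n_1)}_{i_1}\cdots\ell^{(n_N)}_{i_N}$ is a finite linear combination of $N$-th powers of functionals in $X'$; hence $P_{n_1,\dots,n_N}\in\mathcal P_f(^NX;Y')$.

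\emph{Computing the extension.} Since the Aron--Berner extension is linear and coincides with $x''\mapsto\langle x'',\ell\rangle^N y'$ on each monomial $\ell^N y'$, the polarization identity also gives $\overline{\psi_1\cdots\psi_N}(x'')=\prod_j\langle x'',\psi_j\rangle$ for $\psi_1,\dots,\psi_N\in X'$; applying this to the expansion above yields
\[
\overline{P_{n_1,\dots,n_N}}(x'')(y'')=\big\langle\,\Phi(\widehat T_{n_1}x'',\dots,\widehat T_{n_N}x'')\,,\,y''\,\big\rangle ,\qquad x''\in X'',\ y''\in Y'' ,
\]
where $\widehat T_n\colon X''\to X$ is the finite-rank operator with $J_X\widehat T_n=T_n''$ (so $\widehat T_n x''=\sum_i\langle x'',\ell^{(n)}_i\rangle e^{(n)}_i$, well defined by \eqref{schauderapprox}, and $J_X\widehat T_n x''=T_n''x''\xrightarrow{w^*}x''$). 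Introducing, for fixed $y''$, the scalar form $B:=B_{y''}\in\mathcal L(^NX)$, $B(w_1,\dots,w_N)=\langle\Phi(w_1,\dots,w_N),y''\rangle$, the right-hand side equals $B(\widehat T_{n_1}x'',\dots,\widehat T_{n_N}x'')$, while $\overline P(x'')(y'')=\langle\overline\Phi(x'',\dots,x''),y''\rangle=\overline B(x'',\dots,x'')$ with $\overline B$ the Arens extension (limits ordered as in \eqref{arens extension}). Thus \eqref{limite de medibles} reduces to the scalar statement $\overline B(x'',\dots,x'')=\lim_{n_1}\cdots\lim_{n_N} B(\widehat T_{n_1}x'',\dots,\widehat T_{n_N}x'')$ for $B\in\mathcal L(^NX)$.

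\emph{The iterated limit.} I would peel the limits off from the innermost one outward. For fixed $n_1,\dots,n_{N-1}$ only the last argument moves with $n_N$, and $J_X\widehat T_{n_N}x''\xrightarrow{w^*}x''$, so directly from the definition of the Arens extension (which extends the last slot while the preceding ones are held fixed in $X$),
\[
\lim_{n_N} B\big(\widehat T_{n_1}x'',\dots,\widehat T_{n_N}x''\big)=\overline B\big(J_X\widehat T_{n_1}x'',\dots,J_X\widehat T_{n_{N-1}}x'',\,x''\big).
\]
Iterating requires the continuity property that is the technical core of the argument: for fixed $z_1,\dots,z_{k-1}\in X$ and arbitrary $x_{k+1}'',\dots,x_N''\in X''$, the map $u''\mapsto \overline B(J_Xz_1,\dots,J_Xz_{k-1},u'',x_{k+1}'',\dots,x_N'')$ is $\sigma(X'',X')$-continuous; this holds because feeding elements of $J_X(X)$ into the outer slots makes the corresponding iterated limits in \eqref{arens extension} trivial, while the inner slots collapse (via Arens extension in those variables) into a single bounded linear functional of $u''$. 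Granting it, the limit over $n_{N-1}$ replaces $J_X\widehat T_{n_{N-1}}x''=T_{n_{N-1}}''x''\xrightarrow{w^*}x''$ by $x''$ (the outer slots $1,\dots,N-2$ still in $J_X(X)$), and continuing in this way down to $n_1$ produces $\overline B(x'',\dots,x'')$; taking $B=B_{y''}$ gives \eqref{limite de medibles}. The main obstacle is exactly this continuity statement together with the bookkeeping of the iterated limits: the slots must be peeled in the order dictated by the nesting (innermost first), since one cannot hope for $w^*$-continuity in an outer slot once an inner slot already carries a genuine element of $X''\setminus J_X(X)$; and it is essential that $\widehat T_n$ takes values in $X$, which is what turns every intermediate expression into a legitimate value of $B$ or of its Arens extension rather than a merely formal one.
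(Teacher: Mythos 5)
Your proposal is correct and takes essentially the same route as the paper: both define $P_{n_1,\dots,n_N}=\Phi\circ(T_{n_1},\dots,T_{n_N})$ and obtain \eqref{limite de medibles} by feeding the nets $T_{n_j}''x''\in J_X(X)$ into the iterated-limit definition \eqref{arens extension} of the Arens extension. Your version merely spells out details the paper leaves implicit (the finite-type verification via polarization and the slot-by-slot $w^*$-continuity justifying the peeling of the iterated limits).
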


\begin{proof}
Consider  a sequence of finite rank operators $(T_n)_n$ on  $X$ such that
both  $T_n$ and $T'_n$ converge to the respective identities in the strong operator topology. Let
$\Phi$ be the symmetric $N$-linear form associated to $P$ and fix $x'' \in X''$. Combining \eqref{arens extension} with \eqref{schauderapprox} we can compute the Aron-Berner extension of $P$ as
\begin{eqnarray*}
\overline{P}(x'') &=& w^* - \lim_{n_1 \rightarrow \infty} \dots \lim_{n_N \rightarrow \infty} \overline{\Phi}(T_{n_1}''(x''),\dots,T_{n_N}''(x'')) \\
&=& w^* - \lim_{n_1 \rightarrow \infty} \ldots \lim_{n_N \rightarrow \infty} \overline{\Phi \circ (T_{n_1},\ldots,T_{n_N})}(x'',\ldots,x'').
\end{eqnarray*}
The result now follows taking, for each $(n_1,\dots,n_N)\in\N^N$, the homogeneous polynomial
$P_{n_1,\ldots,n_N}\colon X \longrightarrow Y'$  given by
$P_{n_1,\ldots,n_N} = \Phi \circ (T_{n_1},\ldots,T_{n_N})$.
\end{proof}

Now we prove the integral representation for the elements in the tensor product $(\tilde{\otimes}_{\pi_s}^{N,s}X) \tilde{\otimes}_{\pi} Y$, which should be compared with \cite[Theorem~1]{GreRya06} and \cite[Remark~3.6]{GarGreMae09}. As usual, we consider $B_{X''}$ and $B_{Y''}$ endowed with their weak-star topologies, which make them compact sets.

\begin{theorem} \label{formula integral vectorial}
Let $X, Y$ be Banach spaces and suppose that  $X'$ is separable and has the approximation property. Then, for each $u \in (\tilde{\otimes}_{\pi_s}^{N,s}X) \tilde{\otimes}_{\pi} Y$ there exists a regular Borel measure $\mu_u$ on $(B_{X''}, w^*) \times (B_{Y''}, w^*)$ such that $\Vert \mu_u\Vert \leq \Vert u\Vert_{\pi}$ and

\begin{equation} \label{ec. formula integral vectorial}
\left\langle u,P \right\rangle = \int_{B_{X''}\times B_{Y''}} \overline{P}(x'')(y'') d\mu_u(x'',y''),
\end{equation}
for all $P \in \mathcal{P}(^NX;Y')$.
\end{theorem}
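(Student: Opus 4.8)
The plan is to represent $u$ by duality as a functional on $\mathcal{P}(^NX;Y')$ and then realize this functional via the Riesz representation theorem, using the iterated-limit formula from Lemma~\ref{aproximacion puntual por pol. tipo finito} to control measurability. First I would fix $u = \sum_{k=1}^\infty v_k \otimes y_k$ with $v_k = \sum_{j=1}^\infty \lambda_{k,j} x_{k,j}^N$ and $\sum_{k,j}|\lambda_{k,j}|\,\|x_{k,j}\|^N\,\|y_k\| \le \|u\|_\pi + \varepsilon$ (after normalizing we may assume $\|x_{k,j}\|=\|y_k\|=1$, absorbing the norms into the coefficients). For each pair $(k,j)$, the evaluation $\delta_{(J_X x_{k,j}, J_Y y_k)}$ is a regular Borel (Dirac) measure on the compact space $K := (B_{X''},w^*)\times(B_{Y''},w^*)$, and against the function $(x'',y'')\mapsto \overline{P}(x'')(y'')$ it returns $\overline{P}(J_X x_{k,j})(J_Y y_k) = P(x_{k,j})(y_k)$, since the Aron--Berner extension restricts to $P$ on $J_X(X)$. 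Summing, the candidate measure is $\mu_u := \sum_{k,j}\lambda_{k,j}\,\delta_{(J_X x_{k,j}, J_Y y_k)}$, which converges in the total-variation norm of $M(K)$ with $\|\mu_u\| \le \sum_{k,j}|\lambda_{k,j}| \le \|u\|_\pi + \varepsilon$; by \eqref{classical duality} it satisfies \eqref{ec. formula integral vectorial}.

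The catch is that this naive $\mu_u$ depends on the chosen representation of $u$, and more importantly the bound only gives $\|u\|_\pi + \varepsilon$. To get $\|\mu_u\|\le\|u\|_\pi$ without the epsilon, I would instead take a sequence of representations with $\sum_{k,j}|\lambda_{k,j}^{(m)}| \to \|u\|_\pi$, form the corresponding measures $\mu^{(m)}$ with $\|\mu^{(m)}\| \le \|u\|_\pi + 1/m$, and extract a weak-star cluster point $\mu_u$ in $M(K) = C(K)'$ using weak-star compactness of balls; lower semicontinuity of the norm gives $\|\mu_u\|\le\|u\|_\pi$. The point that makes this work is that the pairing $P \mapsto \int_K \overline{P}(x'')(y'')\,d\nu(x'',y'')$ against a \emph{fixed} $P$ is weak-star continuous in $\nu$ (it is integration against the \emph{fixed} continuous function $g_P$, once we know $g_P \in C(K)$), so the identity \eqref{ec. formula integral vectorial} passes to the limit, and all $\mu^{(m)}$ represent the same functional $L_u$ on $\mathcal{P}(^NX;Y')$.

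The main obstacle, and the only place Lemma~\ref{aproximacion puntual por pol. tipo finito} is really needed, is showing that the integrand $g_P\colon (x'',y'') \mapsto \overline{P}(x'')(y'')$ is a \emph{genuine Borel function} (indeed bounded Borel, which is all the Riesz machinery requires) on $(B_{X''},w^*)\times(B_{Y''},w^*)$ — the Aron--Berner extension is generally not weak-star continuous, so $g_P$ need not be in $C(K)$. Here I would invoke the lemma: each finite-type polynomial $P_{n_1,\dots,n_N}$ has Aron--Berner extension that \emph{is} weak-star continuous in both variables (a finite sum of products $\overline{x'}(x'')^N y'(y'')$ with $x'\in X'$, $y'\in Y''{}'$... more precisely, for $P_f(x) = \sum_{i=1}^r x_i'(x)^N y_i'$ one has $\overline{P_f}(x'')(y'') = \sum_i x''(x_i')^N y''(y_i')$, which is $w^*$-continuous), hence in $C(K)$; and \eqref{limite de medibles} exhibits $g_P$ as an iterated pointwise limit of such continuous functions, so $g_P$ is Borel of bounded Baire class. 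Boundedness is immediate from $|\overline{P}(x'')(y'')| \le \|\overline{P}\| = \|P\|$ on $K$. Once $g_P$ is known to be bounded Borel, every $\mu^{(m)}$ integrates it, the identity holds for each $m$, and — since $g_P$ is a fixed bounded Borel function and the $\mu^{(m)}$ are uniformly bounded with a weak-star cluster point — a routine argument (or simply working with the $\varepsilon$-version and noting the functional $L_u$ is the same) yields \eqref{ec. formula integral vectorial} for the limiting measure. The regularity of $\mu_u$ is automatic since $K$ is a compact Hausdorff space and every finite Borel measure on such a space is regular.
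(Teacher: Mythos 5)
Your construction of the approximating measures is sound and genuinely different from the paper's (the paper restricts the functional $\Lambda_u$ to the subspace $\mathcal{P}_f(^NX;Y')\subset C(B_{X''}\times B_{Y''})$, extends by Hahn--Banach and applies Riesz; you build discrete measures $\mu^{(m)}=\sum_{k,j}\lambda^{(m)}_{k,j}\delta_{(J_Xx^{(m)}_{k,j},J_Yy^{(m)}_k)}$ from near-optimal tensor representations and take a weak-star cluster point, with lower semicontinuity of the variation norm giving $\Vert\mu_u\Vert\le\Vert u\Vert_\pi$). Up to and including the validity of \eqref{ec. formula integral vectorial} for each \emph{fixed} $\mu^{(m)}$ and for finite type polynomials against the cluster point $\mu_u$, the argument works.

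The gap is in the very last step, which you dismiss as ``routine'': passing the identity $\int g_P\,d\mu^{(m)}=\langle u,P\rangle$ to the weak-star limit $\mu_u$ for a \emph{general} $P$. Weak-star convergence $\mu^{(m_\alpha)}\to\mu_u$ in $C(K)'$ only controls integrals of \emph{continuous} functions; for a fixed bounded Borel $g_P$ that is not in $C(K)$ (and $g_P$ is generally not weak-star continuous, as you yourself note), $\int g_P\,d\mu^{(m_\alpha)}\not\to\int g_P\,d\mu_u$ in general. Uniform boundedness of the $\mu^{(m)}$ does not help. Measurability of $g_P$ is therefore \emph{not} the only place Lemma~\ref{aproximacion puntual por pol. tipo finito} is needed: its real role is to let you transfer the identity from finite type polynomials to $P$ \emph{after} taking the cluster point. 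The correct completion (which is essentially the second half of the paper's proof) is: since each $g_{P_{n_1,\dots,n_N}}$ lies in $C(K)$, the weak-star convergence gives $\int g_{P_{n_1,\dots,n_N}}\,d\mu_u=\langle u,P_{n_1,\dots,n_N}\rangle$; the norm-boundedness of the family and the iterated pointwise limit \eqref{limite de medibles} allow $N$ applications of the bounded convergence theorem to get $\int g_P\,d\mu_u=\lim_{n_1}\cdots\lim_{n_N}\langle u,P_{n_1,\dots,n_N}\rangle$; and finally one must identify this iterated limit with $\langle u,P\rangle$ (the paper does this by checking equality of two continuous functionals on elementary tensors; in your setup it also follows by applying bounded convergence to one of your discrete measures $\mu^{(m)}$, for which both sides are computed termwise). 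Your parenthetical fallback of ``working with the $\varepsilon$-version'' does not rescue the statement either, since it only yields $\Vert\mu_u\Vert\le\Vert u\Vert_\pi+\varepsilon$, and the sharp bound $\Vert\mu_u\Vert\le\Vert u\Vert_\pi$ is exactly what is used in the proof of Theorem~\ref{Lind polinomial}.
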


\begin{proof}
We first prove the formula for finite type polynomials.  Finite type polynomials from $X$ to $Y'$ can be seen as an isometric subspace of $C(B_{X''} \times B_{Y''})$, identifying   a polynomial  $P=\sum_{j=1}^m (x_j')^N(\cdot) y_j'$ with the function
\begin{equation}\label{eq tipo finito cont de la bola}(x'',y'')\mapsto \sum_{i=1}^m x''(x_j')^N\  y''(y_j')=\overline P(x'')(y'').\end{equation}
On the other hand, from  duality \eqref{dualidad tensores} we have isometrically $$
(\tilde{\otimes}_{\pi_s}^{N,s}X) \tilde{\otimes}_{\pi} Y \hookrightarrow
\left( (\tilde{\otimes}_{\pi_s}^{N,s}X) \tilde{\otimes}_{\pi} Y \right)''=
\left(\mathcal{P}(^NX;Y')   \right)'.$$ Therefore, each $u \in (\tilde{\otimes}_{\pi_s}^{N,s}X) \tilde{\otimes}_{\pi} Y$ defines a linear functional on $\mathcal{P}(^NX;Y')$ which can be restricted  to a linear functional $\Lambda_u$  on the space of finite type polynomials. Note that $$\Lambda_u(P)=\langle u, P\rangle$$ for $P\in \mathcal{P}_f(^NX;Y')$ and that $\|\Lambda_u\|\le \|u\|_{\pi_s}$.
Since $\mathcal{P}_f(^NX;Y')$ is a subspace of $C(B_{X''} \times B_{Y''})$, we extend $\Lambda_u$ by the Hahn-Banach theorem to
a continuous linear  functional on $C(B_{X''} \times B_{Y''})$ preserving the norm. Now, by the Riesz representation theorem, there is a regular Borel measure $\mu_u$ on $(B_{X''}, w^*) \times (B_{Y''},w^*)$ such that $\Vert \mu_u\Vert \leq \Vert u\Vert_{\pi}$ and
$$
\Lambda_u (f) = \int_{B_{X''} \times B_{Y''}} f(x'',y'') d\mu_u(x'',y'')
$$
for $f\in C(B_{X''} \times B_{Y''})$, where we still use $\Lambda_u$ for its extension to  $C(B_{X''} \times B_{Y''})$. In particular, we can consider $f= \sum_{i=1}^m (x_j')^N \otimes y_j' $ and its identification \eqref{eq tipo finito cont de la bola}, so we obtain the integral formula \eqref{ec. formula integral vectorial} for finite type polynomials.

Now, take $P \in \mathcal{P}(^NX;Y')$. By Lemma~\ref{aproximacion puntual por pol. tipo finito}, there exists a norm bounded multi-indexed sequence  of  finite type polynomials $(P_{n_1,\dots,n_N})_{(n_1,\dots,n_N)\in\N^N}$ satisfying equation~\eqref{limite de medibles}.
Since we have already proved the integral formula for finite type polynomials we have
\begin{equation*}
\left\langle u,P_{n_1,\ldots,n_N} \right\rangle = \int_{B_{X''} \times B_{Y''}} \overline{P_{n_1,\ldots,n_N}}(x'')(y'') d\mu_u(x'',y''),
\end{equation*}
for all $(n_1,\ldots,n_N) \in \N^N$. As the sequence $(P_{n_1,\dots,n_N})_{(n_1,\dots,n_N)\in\N^N}$ is norm bounded, we may apply $N$-times the bounded convergence theorem to obtain
\begin{eqnarray*}
\lim_{n_1 \rightarrow \infty} \dots \lim_{n_N \rightarrow \infty} \left\langle u,P_{n_1,\ldots,n_N} \right\rangle &=& \lim_{n_1 \rightarrow \infty} \ldots \lim_{n_N \rightarrow \infty} \int_{B_{X''} \times B_{Y''}} \overline{P_{n_1,\ldots,n_N}}(x'')(y'') d\mu_u(x'',y'') \\
&=& \int_{B_{X''} \times B_{Y''}} \overline{P}(x'')(y'') d\mu_u(x'',y'').
\end{eqnarray*}

It remains to show that $\left\langle u,P \right\rangle  = \lim_{n_1 \rightarrow \infty} \ldots \lim_{n_N \rightarrow \infty} \left\langle u,P_{n_1,\ldots,n_N} \right\rangle$. This follows from the fact that $\left\langle \ \cdot \ ,P \right\rangle$ and $\lim_{n_1 \rightarrow \infty} \ldots \lim_{n_N \rightarrow \infty} \left\langle \  \cdot  \ ,P_{n_1,\ldots,n_N} \right\rangle$ are linear continuous functions on $(\tilde{\otimes}_{\pi_s}^{N,s}X) \tilde{\otimes}_{\pi} Y$ which coincide on elementary tensors. Hence, the proof is complete.
\end{proof}

We are now ready to prove our  Lindenstrauss  theorem for homogeneous polynomials.

\begin{theorem} \label{Lind polinomial}
Let $X, $ $Y$ be Banach spaces. Suppose that $X'$ is separable and has the approximation property. Then, the set of all polynomials in $\mathcal{P}(^NX;Y')$ whose Aron-Berner extension attain their norm is dense in $\mathcal{P}(^NX;Y')$.
\end{theorem}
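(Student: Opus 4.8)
The plan is to derive the statement directly from the classical Bishop--Phelps theorem applied to the projective tensor product $Z:=(\tilde{\otimes}_{\pi_s}^{N,s}X) \tilde{\otimes}_{\pi} Y$, with the integral formula of Theorem~\ref{formula integral vectorial} serving as the bridge between ``attaining the norm as a functional on $Z$'' and ``the Aron--Berner extension attaining its norm''. By the isometric identification \eqref{dualidad tensores}, $\mathcal{P}(^NX;Y')=Z'$, so a polynomial $Q\in\mathcal{P}(^NX;Y')$ attains its norm as a functional on $Z$ exactly when there is $u\in B_Z$ with $\langle u,Q\rangle=\|Q\|$ (rotate $u$ by a unimodular scalar to make the pairing a nonnegative real). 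First I would fix $P\in\mathcal{P}(^NX;Y')$ and $\eps>0$ and invoke Bishop--Phelps on $Z$ to obtain $Q\in Z'=\mathcal{P}(^NX;Y')$ with $\|P-Q\|<\eps$ that attains its norm on $Z$. Discarding the trivial case $Q=0$ (whose Aron--Berner extension is $0$ and hence norm attaining), this produces $u\in B_Z$ with $\langle u,Q\rangle=\|Q\|>0$.

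The core of the proof is then to show that the existence of such a $u$ forces $\ov Q$ to attain its norm. Plug $u$ into Theorem~\ref{formula integral vectorial}: there is a regular Borel measure $\mu_u$ on $(B_{X''},w^*)\times(B_{Y''},w^*)$ with $\|\mu_u\|\le\|u\|_\pi\le1$ and
\[
\|Q\|=\langle u,Q\rangle=\int_{B_{X''}\times B_{Y''}}\ov Q(x'')(y'')\,d\mu_u(x'',y'').
\]
Since $\|\ov Q\|=\|Q\|$, for every $(x'',y'')\in B_{X''}\times B_{Y''}$ we have $|\ov Q(x'')(y'')|\le\|\ov Q(x'')\|\,\|y''\|\le\|\ov Q\|\,\|x''\|^N\le\|Q\|$, whence $\|Q\|\le\int|\ov Q(x'')(y'')|\,d|\mu_u|\le\|Q\|\,\|\mu_u\|\le\|Q\|$, so all these inequalities are equalities. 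In particular $\|\mu_u\|=1$ and $\int\big(\|Q\|-|\ov Q(x'')(y'')|\big)\,d|\mu_u|=0$; since the integrand is nonnegative, the set $A:=\{(x'',y''):|\ov Q(x'')(y'')|=\|Q\|\}$ has $|\mu_u|$-measure equal to $\|\mu_u\|=1$, hence is nonempty. Picking $(x_0'',y_0'')\in A$ and retracing the above estimate at this point gives $\|\ov Q(x_0'')\|=\|\ov Q\|$ with $x_0''\in B_{X''}$, i.e. $\ov Q$ attains its norm. Since $\eps$ was arbitrary, the polynomials with norm-attaining Aron--Berner extension are dense, and taking $Y=\K$ recovers the scalar-valued case.

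The one point that needs genuine justification is that the identity $\langle u,Q\rangle=\int\ov Q(x'')(y'')\,d\mu_u$ --- together with the measurability and boundedness of the integrand --- holds for an \emph{arbitrary} (not necessarily finite-type) polynomial $Q$, since a priori the function $(x'',y'')\mapsto\ov Q(x'')(y'')$ need not be $w^*$-continuous. This is precisely the content of Theorem~\ref{formula integral vectorial}, whose proof passes through Lemma~\ref{aproximacion puntual por pol. tipo finito} to express $\ov Q(x'')(y'')$ as an iterated pointwise limit of functions in $C(B_{X''}\times B_{Y''})$, hence as a bounded Borel function, and then extends the finite-type formula by weak continuity. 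So nothing beyond the two preceding results is required; in particular, once the integral representation is in hand, the classical iterative Lindenstrauss perturbation scheme is entirely bypassed.
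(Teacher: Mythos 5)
Your proposal is correct and follows essentially the same route as the paper: apply Bishop--Phelps in the dual of $(\tilde{\otimes}_{\pi_s}^{N,s}X)\tilde{\otimes}_{\pi}Y$, feed a norming tensor $u$ into the integral representation of Theorem~\ref{formula integral vectorial}, and squeeze $\Vert Q\Vert \le \int\vert\overline{Q}(x'')(y'')\vert\,d\vert\mu_u\vert\le\Vert\overline{Q}\Vert\,\Vert\mu_u\Vert\le\Vert Q\Vert$ to conclude that $\vert\overline{Q}\vert$ equals its supremum on a set of full $\vert\mu_u\vert$-measure, hence at some point of $B_{X''}\times B_{Y''}$. Your explicit remarks on the nonemptiness of the attainment set and on the Borel measurability of $(x'',y'')\mapsto\overline{Q}(x'')(y'')$ only make explicit what the paper leaves implicit.
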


\begin{proof}
Given  $Q \in \mathcal{P}(^NX;Y')$  consider its associated linear function $L_Q \in \left( (\tilde{\otimes}_{\pi_s}^{N,s}X) \tilde{\otimes}_{\pi} Y \right)'$, defined as in  \eqref{classical duality}.  The Bishop-Phelps theorem asserts that, for $\varepsilon >0$ there exists a norm attaining functional $L=L_P$ such that $\Vert L_Q - L_P \Vert < \varepsilon$, for $P$ some polynomial in $\mathcal{P}(^NX;Y')$.
Since $\Vert L_Q - L_P \Vert = \Vert Q - P \Vert$, once we prove that $\overline{P}$ is norm attaining the result follows.

We take $u \in  (\tilde{\otimes}_{\pi_s}^{N,s}X) \tilde{\otimes}_{\pi} Y $  such that $\Vert u\Vert_{\pi}=1$ and $\vert L_P(u)\vert = \Vert L_P\Vert = \Vert P\Vert$. By Theorem~\ref{formula integral vectorial}, there exists a regular Borel measure $\mu_{u}$ on $B_{X''}$ such that
\begin{equation}
\left\langle u,P \right\rangle = \int_{B_{X''} \times B_{Y''}} \overline{P}(x'')(y'') d\mu_{u}(x'',y'') \quad \text{and} \quad \Vert \mu_{u}\Vert \leq \Vert u\Vert_{\pi}=1.
\end{equation}
Then,
\begin{eqnarray*}
\Vert P\Vert = \vert L_P(u)\vert \leq \int_{B_{X''} \times B_{Y''}} \vert\overline{P}(x'')(y'')\vert\ d\vert\mu_{u}\vert(x'',y'') \leq \Vert \overline{P}\Vert \Vert \mu_{u}\Vert \leq \Vert P\Vert.
\end{eqnarray*}
In consequence,
\begin{equation}
\Vert P\Vert= \int_{B_{X''} \times B_{Y''}} \vert\overline{P}(x'')(y'')\vert\ d\vert\mu_{u}\vert(x'',y'').
\end{equation}
In particular, $\Vert \mu_u\Vert=1$  and $\vert\overline{P}(x'')(y'')\vert = \Vert P\Vert$ almost everywhere (for $\mu_{u}$). Hence $\overline{P}$ attains its norm.
\end{proof}

Banach spaces with shrinking bases satisfy the hypotheses of the theorem. Examples of non-reflexive Banach spaces with shrinking bases are preduals of Lorentz sequence spaces, which will be treated in the next section. As an immediate consequence, we  state the scalar version of the previous result.

\begin{corollary}\label{Lind polinomial escalar}
Let $X$ be a  Banach space whose dual is separable and has the approximation property. The set of all polynomials in $\mathcal{P}(^NX)$ whose Aron-Berner extension attains  the norm is dense in $\mathcal{P}(^NX)$.
\end{corollary}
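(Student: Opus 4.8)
The plan is to deduce Corollary~\ref{Lind polinomial escalar} directly from Theorem~\ref{Lind polinomial} by choosing the target space appropriately. Recall that the scalar field $\K$ is itself a dual space: $\K = \K'$ isometrically, with $\K$ in the role of both the predual $Y$ and the dual $Y'$. Thus $\mathcal{P}(^NX) = \mathcal{P}(^NX;\K) = \mathcal{P}(^NX;Y')$ with $Y = \K$, and this identification is isometric and respects the Aron--Berner extension (the extension of a scalar-valued polynomial $P$ is just $\overline P \in \mathcal{P}(^NX'')$, which coincides with the $Y''$-valued extension under $\K'' = \K$). Since $X'$ is separable and has the approximation property by hypothesis, Theorem~\ref{Lind polinomial} applies with this $Y$.

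First I would make precise the identification $\mathcal{P}(^NX;\K') = \mathcal{P}(^NX)$ and check that, under it, a polynomial whose $Y'$-valued Aron--Berner extension attains its norm corresponds exactly to a scalar polynomial whose extension $\overline P \in \mathcal{P}(^NX'')$ attains its norm. This is immediate from $\|\overline P\| = \|P\|$ and the fact that for $y'' \in B_{\K''} = B_\K$ one has $|\overline P(x'')(y'')| = |y''|\,|\overline P(x'')| \le |\overline P(x'')|$, so the supremum over $B_{X''} \times B_{\K}$ is attained precisely when the supremum of $|\overline P(\cdot)|$ over $B_{X''}$ is. Then I would simply invoke Theorem~\ref{Lind polinomial}: the set of polynomials in $\mathcal{P}(^NX;\K')$ whose Aron--Berner extension attains its norm is dense, hence so is the corresponding set in $\mathcal{P}(^NX)$.

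There is essentially no obstacle here; the only point requiring a line of care is the compatibility of the Aron--Berner extension with the scalar/vector-valued identification, and the harmless reflexivity of $\K$. One could alternatively rerun the proof of Theorem~\ref{Lind polinomial} verbatim with $Y = \K$, using the scalar duality $\mathcal{P}(^NX) = (\tilde{\otimes}_{\pi_s}^{N,s}X)'$ from Section~\ref{preliminares} and the integral formula of Theorem~\ref{formula integral vectorial} specialized to $Y = \K$ (where $B_{Y''} = B_\K$ contributes only a bounded scalar factor), but appealing to Theorem~\ref{Lind polinomial} as a black box is cleaner. I would present it in two or three sentences as a genuine corollary rather than reproving anything.
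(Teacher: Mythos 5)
Your proposal is correct and coincides with the paper's treatment: the corollary is stated there as an immediate consequence of Theorem~\ref{Lind polinomial}, obtained precisely by taking $Y=\K$ so that $\mathcal{P}(^NX)=\mathcal{P}(^NX;\K')$. Your extra remark on the compatibility of the Aron--Berner extension with this identification is the only point worth a word, and you handle it correctly.
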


It should be noted that there are Banach spaces which do not satisfy the hypotheses of Theorem~\ref{Lind polinomial} and Corollary~\ref{Lind polinomial escalar}, for which the polynomial Lindenstrauss theorem holds. For example, Theorem 2.7 in \cite{ChoiKim96} states that, if $X$ has the Radon-Nikod\'{y}m property, then the set of norm attaining polynomials from $X$ to any Banach space $Y$ is dense in $\mathcal P (^NX,Y)$, for any $N\in \mathbb N$. As a consequence, $\ell_1$ (whose dual is not separable) satisfies a polynomial Bishop-Phelps theorem, which is stronger than the polynomial Lindenstrauss theorem.

\section{Examples on preduals of Lorentz sequence spaces} \label{ejemplos en preduales de lorentz}
Lorentz sequence spaces  and their preduals are classical Banach spaces that proved  useful to get a better understanding of some problems related to norm attaining operators and nonlinear functions. In fact, it is  $d_*(w, 1)$, a predual of a Lorentz sequence space $d(w,1)$, on which the first counterexample to the Bishop-Phelps theorem for  bilinear forms and 2-homogeneous scalar-valued polynomials \cite{AAP} was modeled. Moreover,  the set of $N$-homogeneous polynomials attaining  the norm is dense in $\mathcal P(^N d_*(w,1))$ if and only if the weight $w$ is not in $\ell_N$, \cite[Theorem~3.2]{JP}. Also, there is an analogous result for multilinear forms \cite[Theorem~2.6]{JP}.

We recall the definition and some elementary facts about Lorentz sequence spaces (see \cite[Chapter~4.e]{LT1} for further details). Let $w=(w_i)_{i \in \mathbb{N}}$ be a decreasing sequence of nonnegative real numbers with $w_1 = 1$, $\lim w_i =0$ and $\sum_{i}w_i = \infty$. Such sequences are called \emph{admissible}. If  $1 \leq s < \infty$ is fixed, the Lorentz sequence space $d(w,s)$ associated to an admissible sequence $w=(w_i)_{i\in \N}$ is the vector space of all bounded sequences $x=(x(i))_i$ such that
$$
\Vert x\Vert_{w,s} : =\left(\sum_{i=1}^{\infty} x^*(i)^s w_i\right)^{1/s} < \infty,
$$
where $x^*=(x^*(i))_i$ is the decreasing rearrangement of $(x(i))_i$.  The norm $\Vert \cdot\Vert_{w,s}$ makes $d(w,s)$  a Banach space which is reflexive if and only if $1<s<\infty$.

For $s=1$, i.e., in the nonreflexive case, the dual  space of $d(w,1)$ is denoted by  $d^*(w,1)$ and consists of all bounded sequences $x$ such that
$$
\Vert x\Vert_W : = \sup_{n}\frac{\sum_{i=1}^n x^*(i)}{W(n)} < \infty,
$$
where $W(n)= \sum_{i=1}^n w_i$. The predual of the Lorentz space $d(w,1)$, denoted by $d_*(w,1)$,  is the subspace of $d^*(w,1)$ of all the sequences $x$ satisfying
$$
\lim_{n\to \infty}\frac{\sum_{i=1}^n x^*(i)}{W(n)}=0.
$$

If $X$ denotes any of the spaces $d_*(w,1), d(w,1)$, $d^*(w,1)$, the condition $w_1=1$  is equivalent to the assumption that $\|e_i\|=1$ for all $i$ in $\N$, where $e_i$ stands for the canonical $i$-th vector of $X$. For any admissible sequence $w$,  $X$ is contained in $c_0$ as a set and therefore, for each element $x\in X$  there exists an injective mapping $\sigma \colon \mathbb{N} \rightarrow \mathbb{N}$ such that $x^*$  is of the form $x^* = (\vert x(\sigma(i))\vert)_i$.

If $w \in \ell_r$, $1 < r< \infty$, a direct application of H\"older's inequality shows that the canonical inclusion  $\ell_{r^*}\hookrightarrow d(w,1)$ is a bounded operator. By transposition and restriction, both mappings \begin{equation}\label{inclusiones}d^*(w,1) \hookrightarrow \ell_r\quad\text{ and }\quad d_*(w,1) \hookrightarrow \ell_r\end{equation} are also bounded.
The geometry of the unit ball of $d_*(w,1)$ (more precisely the lack of extreme points) plays a crucial role in the proof of \cite[Theorem~2.6]{JP} and \cite[Theorem~3.2]{JP}, and also in our results below. The fundamental property of these spaces \cite[Lemma~2.2]{JP} is that
 any $x \in B_{d_*(w,1)}$, satisfies the following condition:
\begin{equation}\label{propiedad extremal}
\exists \  n_0 \in \mathbb{N} \  \text{\ and\ }\ \delta >0\  \text{\ such that\ }\quad  \Vert x + \lambda e_n\Vert_W \leq 1,\quad  \forall \ |\lambda| \le  \delta \text{\  and \  }  n \geq n_0.
\end{equation}

Finally, preduals of Lorentz sequence spaces have shrinking basis.
Then, at the light of Corollary~\ref{Lind polinomial escalar}  and \cite[Theorem~3.2]{JP}, for $w \in \ell_N$,  the polynomial Lindenstrauss theorem holds for $d_*(w,1)$ but the polynomial Bishop-Phelps theorem does not.

\begin{example} \label{primer ejemplo}
Let  $w$  be an admissible sequence  in $\ell_M$,  $M\ge 2$.
Then,  the set of norm-attaining $N$-homogeneous polynomials on $d_*(w,1)$ is not dense in $\mathcal{P}(^Nd_*(w,1))$ for any $N\ge M$, while the set of those whose Aron-Berner extension attains the norm is dense for every $N$.
\end{example}

In particular, the above example shows that there exists a Banach space satisfying the polynomial Lindenstrauss theorem and failing the scalar-valued polynomial Bishop-Phelps theorem  for $N$-homogeneous polynomials, for all $N$ (just take $w\in \ell_2$). Now we show that, given any admissible sequence $w \in \ell_r$, $1 <r<\infty$, there exists a Banach space $Y$ such that the set of norm attaining $N$-homogeneous polynomials  fails to be dense in $\mathcal{P}(^Nd_*(w,1);Y')$ for all $N\ge 2$ (while the polynomial Lindenstrauss theorem holds for any $N$). In order to do so, we state as lemmas two useful results. The proof of the first one is similar to those of  \cite[Lemma~3.1]{JP}  and \cite[Proposition~4]{Lind}.

\begin{lemma} \label{lema extremales poli}
In the complex case, let $X$  be a Banach sequence space and  $Y$ be strictly convex.
If $P \in \mathcal{P}(^NX;Y)$ attains the norm at some point satisfying condition~\eqref{propiedad extremal} for some $n_0\in\N$, then $P(e_n)=0$, for all $n \geq n_0$.
\end{lemma}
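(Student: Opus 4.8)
The plan is to exploit the extremal condition \eqref{propiedad extremal} together with strict convexity of $Y$, following the scheme of \cite[Lemma~3.1]{JP} and \cite[Proposition~4]{Lind}. Suppose $P\in\mathcal{P}(^NX;Y)$ attains its norm at a point $x\in B_X$ satisfying \eqref{propiedad extremal} for some $n_0\in\N$ and $\delta>0$; after normalizing we may assume $\|P\|=1$, so $\|P(x)\|=1$. Fix $n\ge n_0$. By \eqref{propiedad extremal} we have $x+\lambda e_n\in B_X$ for all $|\lambda|\le\delta$, and hence $\|P(x+\lambda e_n)\|\le 1$ for all such $\lambda$. Writing $\Phi$ for the symmetric $N$-linear form associated to $P$ and expanding, $P(x+\lambda e_n)=\sum_{k=0}^N \binom{N}{k}\lambda^k\Phi(x,\overset{N-k}{\ldots},x,e_n,\overset{k}{\ldots},e_n)$, which is a $Y$-valued polynomial in the complex variable $\lambda$. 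The first step is therefore to observe that this vector-valued polynomial in $\lambda$ is bounded by $1$ on the disc $|\lambda|\le\delta$ and equals $P(x)$ (a norm-one vector) at $\lambda=0$.

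Next I would invoke the (vector-valued) maximum modulus principle in the strictly convex space $Y$: a $Y$-valued analytic function on the disc $\delta\D$ whose norm attains its maximum at an interior point must be constant. Concretely, pick $y'\in S_{Y'}$ with $y'(P(x))=\|P(x)\|=1$; then $\lambda\mapsto y'(P(x+\lambda e_n))$ is a scalar analytic function on $\delta\D$, bounded by $1$ in modulus, attaining the value $1$ at $\lambda=0$, hence constant equal to $1$ by the scalar maximum modulus principle. Thus $\|P(x+\lambda e_n)\|\ge |y'(P(x+\lambda e_n))|=1$, and combined with $\|P(x+\lambda e_n)\|\le 1$ we get $\|P(x+\lambda e_n)\|=1$ for all $|\lambda|\le\delta$, and moreover $y'(P(x+\lambda e_n))=1=\|P(x+\lambda e_n)\|$. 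Now strict convexity of $Y$ enters: a norm-one functional $y'$ attains its norm at only one point of $S_Y$, so $P(x+\lambda e_n)=P(x)$ for all $|\lambda|\le\delta$. Consequently the $Y$-valued polynomial $\lambda\mapsto P(x+\lambda e_n)-P(x)$ vanishes identically on $\delta\D$, so all its coefficients vanish; in particular the coefficient of $\lambda^N$, namely $\Phi(e_n,\ldots,e_n)=P(e_n)$, is zero. Since $n\ge n_0$ was arbitrary, $P(e_n)=0$ for all $n\ge n_0$.

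The main obstacle is the passage from \textquotedblleft$\|P(x+\lambda e_n)\|$ is constant\textquotedblright\ to \textquotedblleft$P(x+\lambda e_n)$ is constant,\textquotedblright\ which is exactly where strict convexity and the complex hypothesis are used: one needs an analytic (not merely real-analytic) function of $\lambda$ so that the scalar maximum modulus principle forces $y'\circ P(x+\lambda e_n)$ to be constant, and then strict convexity upgrades equality of norms to equality of vectors. In the real case the argument breaks down because $\lambda\mapsto P(x+\lambda e_n)$ need not be constant even if its norm is, which is why the statement is restricted to the complex setting. A minor point to check is that $e_n$ indeed lies in $X$ and that the expansion of $P(x+\lambda e_n)$ is valid term-by-term, but this is immediate since $X$ is a Banach sequence space and $P$ is a continuous $N$-homogeneous polynomial.
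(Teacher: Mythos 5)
Your proof is correct and is precisely the argument the paper alludes to when it says the proof is "similar to those of \cite[Lemma~3.1]{JP} and \cite[Proposition~4]{Lind}": expand $P(x+\lambda e_n)$ in $\lambda$, use the scalar maximum modulus principle through a norming functional $y'$, and then strict convexity to conclude $P(x+\lambda e_n)\equiv P(x)$, whence the top coefficient $P(e_n)$ vanishes. No gaps.
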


As we have already mentioned, condition~\eqref{propiedad extremal} is satisfied by any $a$ in the unit ball of  $d_*(w,1)$. Therefore, the previous lemma applies to  every norm attaining polynomial from $d_*(w,1)$ to any strictly convex Banach space. The proof of the following result
can be extracted  from \cite[Theorem~3.2]{JP}.

\begin{lemma} \label{ppio mod max real}
For the  real case, let $w$ be an admissible sequence in $\ell_N$, $N\ge 2$, and take $M$ the smallest natural number such that $w \in \ell_M$. Suppose that $p \in \mathcal{P}(^N d_*(w,1))$ attains its norm at $a \in B_{d_*(w,1)}$ and let $\phi$ be the symmetric $N$-linear form associated to $p$.
\begin{enumerate}
\item[(i)] If $p(a) >0$\quad  then \quad  $\limsup_n \phi(a,\ldots,a,e_n,\overset{M}{\dots},e_n) \leq 0$.
\item[(ii)] If $p(a) <0$\quad then \quad $\liminf_n \phi(a,\ldots,a,e_n,\overset{M}{\dots},e_n) \geq 0$.
\end{enumerate}
\end{lemma}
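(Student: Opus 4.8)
The plan is to mimic the argument that proves \cite[Theorem~3.2]{JP}, isolating the part that shows norm-attaining polynomials on $d_*(w,1)$ form a non-dense set into the quantitative statement above. First I would fix $a\in B_{d_*(w,1)}$ at which $p$ attains its norm, and invoke the fundamental extremal property \eqref{propiedad extremal}: there are $n_0\in\N$ and $\delta>0$ with $\Vert a+\lambda e_n\Vert_W\le 1$ for all $|\lambda|\le\delta$ and all $n\ge n_0$. The real-variable function $t\mapsto p(a+t e_n)$ is then a real polynomial in $t$ of degree $\le N$, defined for $t\in[-\delta,\delta]$ with values bounded in modulus by $\Vert p\Vert=|p(a)|=|p(a+0\cdot e_n)|$; that is, it attains its maximum modulus at the interior point $t=0$. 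Expanding by the symmetric multilinear form $\phi$,
\begin{equation*}
p(a+t e_n)=\sum_{k=0}^{N}\binom{N}{k} t^{k}\,\phi(a,\overset{N-k}{\dots},a,e_n,\overset{k}{\dots},e_n),
\end{equation*}
so the coefficient of $t^{k}$ is $c_{k}(n):=\binom{N}{k}\phi(a,\dots,a,e_n,\dots,e_n)$ with $k$ copies of $e_n$.

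The next step is to feed this into the geometry of the weight $w$. The point is that $e_n\to 0$ weakly in $d_*(w,1)$ (the basis is shrinking), so $\phi(a,\dots,a,e_n,\dots,e_n)\to 0$ whenever at least one slot carries $e_n$ \emph{provided} the corresponding multilinear functional is weakly continuous in that variable; for $k<M$ this weak-to-zero behaviour holds because such terms are controlled by the inclusion-type estimates coming from $w\in\ell_{k}$ being false only for $k<M$—here I would reproduce the estimate from \cite[Theorem~3.2]{JP} showing $c_k(n)\to 0$ for $1\le k\le M-1$. Thus asymptotically $p(a+te_n)$ behaves, to order $t^{M}$, like $p(a)+\binom{N}{M} t^{M}\phi(a,\dots,a,e_n,\overset{M}{\dots},e_n)+o(t^{M})+(\text{higher order in }t)$. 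Since $|p(a+te_n)|\le|p(a)|$ for all small $t$ of both signs, if $p(a)>0$ the $t^{M}$-perturbation cannot push the value above $p(a)$ for both signs of $t$ unless its coefficient is $\le 0$ in the limit; taking $t>0$ small and then $\limsup_n$ forces $\limsup_n\phi(a,\dots,a,e_n,\overset{M}{\dots},e_n)\le 0$, which is (i). Part (ii) is the same argument applied to $-p$, or equivalently reading the inequality at $p(a)<0$.

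The main obstacle, and the place where care is genuinely needed, is the interplay between the order $t^{M}$ of the perturbation and the sign analysis: one must be sure that the $t^{M}$ term is the \emph{first} non-negligible one after the constant, i.e. that all lower-order coefficients $c_1(n),\dots,c_{M-1}(n)$ tend to $0$, and simultaneously that the higher-order terms $t^{M+1},\dots,t^{N}$ (whose coefficients are merely bounded, not small) do not overwhelm the $t^{M}$ term. This is handled by choosing $t=t(n)$ going to $0$ slowly relative to $n\to\infty$—first let $n\to\infty$ to kill the low-order coefficients, then let $t\to 0^{+}$ to make the $O(t^{M+1})$ tail negligible against $t^{M}$—which is precisely the double-limit bookkeeping that makes the statement come out as a $\limsup$ (resp. $\liminf$) over $n$ rather than an honest limit. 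The estimate $c_k(n)\to 0$ for $k<M$ itself reduces to H\"older's inequality via the bounded inclusion $d_*(w,1)\hookrightarrow\ell_{r}$ from \eqref{inclusiones} together with the fact that finitely many copies of the weakly null sequence $(e_n)$ in the $\ell_{r}$-direction force the relevant partial multilinear form to vanish in the limit; this is exactly the computation carried out in \cite[Theorem~3.2]{JP}, which I would cite rather than redo.
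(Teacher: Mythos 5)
The paper itself gives no proof of this lemma beyond the remark that it ``can be extracted from the proof of \cite[Theorem~3.2]{JP}'', so the only meaningful comparison is with that extraction. Your skeleton is the right one and is what one does extract: by \eqref{propiedad extremal} the real polynomial $t\mapsto p(a+te_n)$ is defined and bounded above by $p(a)$ on $[-\delta,\delta]$ for all $n\ge n_0$; writing it as $p(a)+\sum_{k=1}^{N}c_k(n)t^k$ with $c_k(n)=\binom{N}{k}\phi(a,\dots,a,e_n,\overset{k}{\dots},e_n)$, the coefficients with $k>M$ are uniformly bounded and hence contribute $O(t^{M+1})$, and the double limit (first $n\to\infty$ to kill $c_1,\dots,c_{M-1}$, then $t\to 0^{+}$) isolates the sign of $\limsup_n c_M(n)$; part (ii) follows by applying (i) to $-p$.

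There is, however, a genuine gap in the one step that carries all the content, namely $c_k(n)\to 0$ for $2\le k\le M-1$. (For $k=1$ nothing is needed beyond the fact that $t=0$ is an interior maximum, which forces $c_1(n)=0$ exactly; weak nullity of $(e_n)$ is irrelevant there.) Neither of your two justifications works for $k\ge 2$. Weak nullity of $(e_n)$ says nothing about the diagonal of a $k$-linear form when $k\ge 2$ --- the inner product on $\ell_2$ is the standard warning. More seriously, the proposed reduction ``to H\"older's inequality via the bounded inclusion $d_*(w,1)\hookrightarrow\ell_r$ from \eqref{inclusiones}'' is unavailable precisely where you need it: that inclusion is bounded exactly when $w\in\ell_r$, whereas for $k<M$ one has $w\notin\ell_k$ by the minimality of $M$. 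Whatever the precise argument in \cite{JP}, it must exploit the \emph{failure} of $w\in\ell_k$ (via the geometry of the blocks $e_{n_1}+\cdots+e_{n_m}$, whose norm is $m/W(m)$, and the unconditionality of $\Vert\cdot\Vert_W$), not the presence of an embedding into $\ell_k$; this is a separate lemma-level fact about $d_*(w,1)$, not a consequence of H\"older. Since you ultimately defer to \cite{JP} for this estimate the conclusion is salvageable, but the mechanism you describe is backwards, and if written out as stated the proof would have a hole whenever $M\ge 3$.
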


\begin{proposition} \label{no vale BP}
Let $w$ be an admissible sequence and suppose $M$ is the smallest natural number such that $w\in \ell_M$ (we assume such an $M$ exists).  Then,  $\mathcal{NAP}(^Nd_*(w,1); \ell_M)$ is not dense in $\mathcal{P}(^Nd_*(w,1); \ell_M)$, for any $N \in \mathbb{N}$.
\end{proposition}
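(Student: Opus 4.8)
The plan is to exhibit, for each $N\in\N$, a polynomial $P\in\mathcal P(^Nd_*(w,1);\ell_M)$ and a radius $\varepsilon_0>0$ such that $\|P-Q\|\ge\varepsilon_0$ for every $Q\in\mathcal{NAP}(^Nd_*(w,1);\ell_M)$; this says exactly that the open ball of radius $\varepsilon_0$ about $P$ meets no norm attaining polynomial, so the set is not dense. Throughout I would use that $M\ge2$ (because $\sum_iw_i=\infty$ forces $w\notin\ell_1$), that by \eqref{inclusiones} the inclusions $d_*(w,1)\hookrightarrow\ell_M$ and $d_*(w,1)\hookrightarrow\ell_{NM}$ are bounded (write $C$ for a common bound of their norms), that $\ell_M$ is uniformly convex and smooth since $2\le M<\infty$, and that every $a\in B_{d_*(w,1)}$ satisfies condition \eqref{propiedad extremal}.

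\textbf{Complex scalars, any $N$.} I would take the diagonal polynomial $P(x)=(x(n)^N)_n$. Its symmetric $N$-linear form $(x_1,\dots,x_N)\mapsto(x_1(n)\cdots x_N(n))_n$ is bounded into $\ell_M$: by H\"older with exponent $NM$ in each variable together with $d_*(w,1)\hookrightarrow\ell_{NM}$ one gets $\|(x_1(n)\cdots x_N(n))_n\|_{\ell_M}\le\prod_j\|x_j\|_{\ell_{NM}}\le C^N\prod_j\|x_j\|$, so $P\in\mathcal P(^Nd_*(w,1);\ell_M)$; moreover $P(e_n)=e_n$, whence $\|P(e_n)\|_{\ell_M}=1$ for every $n$. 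If $Q$ attains its norm at $a\in B_{d_*(w,1)}$, then since $\ell_M$ is strictly convex and $a$ satisfies \eqref{propiedad extremal}, Lemma~\ref{lema extremales poli} gives $Q(e_n)=0$ for all $n\ge n_0$, hence $\|P-Q\|\ge\|(P-Q)(e_n)\|_{\ell_M}=1$. Thus $\varepsilon_0=1$ works, which settles the complex case for every $N$.

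\textbf{Real scalars, $N\ge M$.} I would instead take $P(x)=r(x)\,c$ with $0\ne c\in\ell_M$ fixed and $r(x)=\|x\|_{\ell_M}^{M}\big(\sum_{i\le K}x(i)\big)^{N-M}$ (dropping the last factor when $N=M$), where $K\ge1$; here $r\in\mathcal P(^Nd_*(w,1))$ because $x\mapsto\sum_kx(k)^M$ is, by H\"older and $d_*(w,1)\hookrightarrow\ell_M$, the polynomial associated to the bounded symmetric $M$-linear form $(x_1,\dots,x_M)\mapsto\sum_kx_1(k)\cdots x_M(k)$. Since $\|P(x)\|_{\ell_M}=|r(x)|\,\|c\|_{\ell_M}$ and $\|r\|\ge|r(e_1)|=1$, if $Q$ attains its norm at $a$ with $\|P-Q\|<\varepsilon$ then $\|P(a)\|_{\ell_M}>\|P\|-2\varepsilon$, so $|r(a)|$ is within $2\varepsilon/\|c\|_{\ell_M}$ of $\|r\|\ge1$; as $|r(a)|\le C^{M}\big|\sum_{i\le K}a(i)\big|^{N-M}$ this forces $\big|\sum_{i\le K}a(i)\big|\ge\rho$ for a constant $\rho>0$ depending only on $w,M,N,c$ (once $\varepsilon$ is small). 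Let $\psi_0\in S_{\ell_{M^{*}}}$ be the norming functional of $Q(a)$, unique by smoothness of $\ell_M$; then $\psi_0\circ Q$ attains its norm at $a$ with value $\|Q\|>0$, so Lemma~\ref{ppio mod max real}(i), applied to $\psi_0\circ Q$ (whose symmetric $N$-linear form is $\psi_0\circ\Phi$, $\Phi$ being that of $Q$), gives $\limsup_n\langle\psi_0,\Phi(a^{N-M},e_n^{M})\rangle\le0$. On the other hand, a direct computation from the product structure of $P$ shows that its symmetric $N$-linear form $\widetilde P$ satisfies $\widetilde P(a^{N-M},e_n^{M})=\binom{N}{M}^{-1}\big(\sum_{i\le K}a(i)\big)^{N-M}c$ for \emph{every} $n>K$; since $\|\widetilde P-\Phi\|\le\tfrac{N^N}{N!}\|P-Q\|$ and, by uniform smoothness, $\psi_0$ is close to the norming functional $\operatorname{sign}(r(a))\psi_c$ of $P(a)$ (with $\psi_c$ norming $c$), one gets $\langle\psi_0,\Phi(a^{N-M},e_n^{M})\rangle\ge\binom{N}{M}^{-1}\rho^{\,N-M}\|c\|_{\ell_M}-O(\varepsilon)$ for all large $n$. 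For $\varepsilon$ small this contradicts the inequality from Lemma~\ref{ppio mod max real}(i), so $\varepsilon_0$ may be taken to be this fixed positive quantity.

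\textbf{Real scalars, $1\le N<M$, and the main obstacle.} For $N=1$ the diagonal polynomial $P(x)=(x(n))_n$ works via a Lindenstrauss-type perturbation: given $Q$ norm attaining at $a$, choose $j$ indices $k_1<\dots<k_j$ so large that $|a(k_i)|$ is negligible and $k_i\ge n_0$, and a real $\lambda$ of order $j^{-1/M}$, so that $x_j=a+\lambda\sum_ie_{k_i}$ still lies in $B_{d_*(w,1)}$ — this is exactly where $w\in\ell_M\setminus\ell_{M-1}$ is used, since it makes $W(q+j)-W(q)$ comparable to $j^{\,1-1/M}$ with constants depending only on $w,M,C$ — whence $\|P(x_j)\|_{\ell_M}^{M}\ge\|P(a)\|_{\ell_M}^{M}+jc'|\lambda|^{M}\ge\|P(a)\|_{\ell_M}^{M}+c''$ for fixed $c',c''>0$, incompatible with $\|P(x_j)\|_{\ell_M}\le\|Q(x_j)\|+\varepsilon\le\|Q\|+\varepsilon=\|P(a)\|_{\ell_M}+O(\varepsilon)$. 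For $2\le N<M$ the diagonal polynomial responds to this perturbation only to order $N$ (a gain of size $j^{1-N}\to0$), and no scalar obstruction is available because $w\notin\ell_N$; one must therefore build a polynomial into $\ell_M$ that is bounded yet whose value at the attaining point can be raised by a definite amount under admissible perturbations — for instance of the form $r(x)c$ with $r(x)=\sum_kx(k)^{N}\ell_k^{\,M-N}$ for a carefully chosen $\ell\in\ell_M$ with $\ell_k>0$ (a genuine polynomial by H\"older). Carrying out this last step \emph{uniformly} over all norm attaining $Q$, i.e.\ independently of how little slack $a$ has and using only that every $a\in B_{d_*(w,1)}$ inherits $\sum_{i\le n}a^{*}(i)=o(W(n))$ and $\|a\|_{\ell_M}\le C$, is the main technical difficulty of the proposition.
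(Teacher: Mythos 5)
Your complex case is correct and coincides with the paper's: the diagonal polynomial together with Lemma~\ref{lema extremales poli} gives a uniform obstruction of size $1$. The real case, however, contains a genuine gap which you yourself flag: the subcase $2\le N<M$ is only described, not proved. This is exactly the range where the proposition has content that no scalar-valued or rank-one argument can reach: for $N<M$ we have $w\notin\ell_N$, so by \cite[Theorem~3.2]{JP} the \emph{scalar} Bishop--Phelps theorem holds on $d_*(w,1)$ in degree $N$; consequently any polynomial of the form $x\mapsto r(x)c$ with $r$ scalar-valued \emph{can} be approximated by norm attaining ones, and your strategy for $N\ge M$ (rank-one valued, reducing everything to Lemma~\ref{ppio mod max real}) cannot be adapted. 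Two further points would need repair even where you do give details: (i) for odd $M$ the function $x\mapsto\Vert x\Vert_{\ell_M}^M$ is not a polynomial, and replacing it by $\sum_k x(k)^M$ introduces the factor $\mathrm{sign}\big(\sum_k a(k)^M\big)$, which may be negative and reverses the lower bound you need to contradict Lemma~\ref{ppio mod max real}(i); (ii) in your $N=1$ argument, $w\in\ell_M\setminus\ell_{M-1}$ yields only the upper bound $w_n\le\Vert w\Vert_{\ell_M}\,n^{-1/M}$, so $W(q+j)-W(q)$ need not be comparable to $j^{1-1/M}$ from below, and the admissibility of the perturbation $a+\lambda\sum_ie_{k_i}$ with $\lambda\sim j^{-1/M}$ is not justified.

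The idea you are missing is the paper's composition trick, which treats all $N\in\N$ at once. Take $Q(x)=(x(1)^{N-1}x(i))_i$, suppose $P$ is norm attaining at $a$ with $\Vert P-Q\Vert$ small, and compose with the sign-adapted, norm-one, $M$-homogeneous polynomial $q_{P,a}(y)=\sum_i\lambda_i^My(i)^M$ on $\ell_M$, where $\lambda_i=\pm1$ according to the sign of $P(a)(i)$. Then $q_{P,a}\circ P$ is a \emph{scalar} $NM$-homogeneous polynomial attaining its norm at $a$ with the positive value $\Vert P\Vert^M$, and since $NM\ge M$, Lemma~\ref{ppio mod max real} applies to it; on the other hand the symmetric form $\psi$ of $q_{P,a}\circ Q$ satisfies $\binom{NM}{M}\psi(a,\dots,a,e_n,\overset{M}{\dots},e_n)=\lambda_n^Ma(1)^{M(N-1)}$. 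Using the uniform equicontinuity of norm-one $M$-homogeneous polynomials on $\ell_M$ to make $\Vert q_{P,a}\circ P-q_{P,a}\circ Q\Vert$ small uniformly in $q_{P,a}$, and (for odd $M$) exploiting that the norm is also attained at $-a$, one obtains $|a(1)|^{M(N-1)}\le\binom{NM}{M}\varepsilon$ and hence $\Vert Q\Vert=0$, a contradiction. Raising the degree from $N$ to $NM$ is precisely what removes the case distinction between $N\ge M$ and $N<M$ that blocks your approach.
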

\begin{proof}
\textit{The complex case}.
Since $w \in \ell_M$, we consider   $Q\colon d_*(w,1) \longrightarrow \ell_M$ given by $Q(x) = (x(i)^N)_i$, which is a well defined and continuous polynomial by \eqref{inclusiones}. Suppose that $Q$ is approximable by norm attaining polynomials. Thus, for fixed $0<\varepsilon <1$ there exists $P \in \mathcal{NAP}(^Nd_*(w,1);\ell_M)$ such that $\Vert P-Q\Vert<\varepsilon$ and therefore $\vert \Vert P(e_n)\Vert - \Vert Q(e_n)\Vert\vert < \varepsilon$ for all $n \in \mathbb{N}$. By Lemma~\ref{lema extremales poli}, there exists $n_0$ such that $P(e_n)=0$ for all $n \geq n_0$. Hence, $1 = \Vert Q(e_n) \Vert < \varepsilon$, for all $n \geq n_0$, and  the result follows by contradiction.

\textit{The real case}. Now, we consider  $Q\colon d_*(w,1) \longrightarrow \ell_M$ the  continuous polynomial defined by $Q(x) = (x(1)^{N-1} x(i))_i$. Suppose that $Q$ is approximable by norm attaining polynomials and fix $\varepsilon>0$. Norm one $M$-homogeneous polynomials (on $ \ell_M$) are uniformly equicontinuous. Therefore, we can take $P \in \mathcal{NAP}(^Nd_*(w,1);\ell_M)$ close enough to $Q$ such that
\begin{equation}\label{eq-equicont}
\Vert q \circ Q - q \circ P\Vert < \varepsilon\  \textstyle{\frac{(NM)!}{(NM)^{NM}}}
\end{equation}for every norm one polynomial $q\in\mathcal P(^M\ell_M)$.

Let $a \in B_{d_*(w,1)}$ be such that $\Vert P(a)\Vert= \Vert P\Vert$ and consider the norm one $M$-homogeneous polynomial $q_{P,a}\colon \ell_M \longrightarrow \mathbb{R}$ given by
$$
q_{P,a}(x) = \sum_i \lambda_i^M\ x(i)^M,
$$
where $ \lambda_i=1$ if $P(a)(i)\ge 0$ and $ \lambda_i= -1$ otherwise.
Note that $q_{P,a} \circ P \colon d_*(w,1) \longrightarrow \mathbb{R}$ is an $NM$-homogeneous polynomial attaining its norm at $a$, with $q_{P,a} \circ P (a) =\|P\|^M$. Also,
$$
q_{P,a} \circ Q(x) = x(1)^{M(N-1)} \sum_i  \lambda_i^M\ x(i)^M, \quad \text{for all}\quad x\in d_*(w,1),
$$
and then
$\|q_{P,a} \circ Q\|=\|Q\|^M$. Let $\phi$ and $\psi$ be the symmetric $NM$-linear forms associated to $q_{P,a} \circ P$ and $q_{P,a} \circ Q$, respectively. By \eqref{eq-equicont}, we have
$\|\phi -\psi\|< \varepsilon$.

Since $q_{P,a} \circ P (a) >0$,  Lemma~\ref{ppio mod max real} gives that
\begin{equation} \label{NMlineal phi}
\limsup_n \phi(a,\dots,a,e_n,\overset{M}{\dots},e_n) \leq 0.
\end{equation}
On the other hand,
\begin{equation} \label{NMlineal psi}
{\textstyle \binom{NM}{M}} \psi(a,\dots,a,e_n,\overset{M}{\dots},e_n) =  \lambda_n^M a(1)^{M(N-1)}.
\end{equation}
Suppose that $M$ is even. Since $\|\phi -\psi \| < \varepsilon$,  combining \eqref{NMlineal phi} and \eqref{NMlineal psi} we obtain
\begin{equation} \label{desigualdad con a1}
 \vert a(1)\vert^{M(N-1)} = \textstyle{ \binom{NM}{M}} \lim_n \psi(a,\dots,a,e_n,\overset{M}{\dots},e_n) \leq  \binom{NM}{M} \varepsilon.
\end{equation}

Therefore,
\begin{eqnarray*}
\|Q\|^M=\Vert q_{P,a} \circ Q\Vert &\leq& \Vert q_{P,a} \circ P\Vert + \varepsilon\ \textstyle{\frac{(NM)!}{(NM)^{NM}}} \\ & < & \vert q_{P,a} \circ Q(a)\vert + 2\varepsilon \ \textstyle{\frac{(NM)!}{(NM)^{NM}}}\\
& \leq & \vert a(1)\vert^{M(N-1)} \sum_i \vert a(i)\vert^{M} + 2\varepsilon\ \\
&\leq& \varepsilon \Big( {\textstyle \binom{NM}{M}} \sum_i w_i^M + 2\Big).
\end{eqnarray*}
Since the last inequality is valid for all $\varepsilon >0$, we get that $\Vert Q\Vert=0$, which is a contradiction.

Now suppose that $M$ is odd. We give the proof for $N$ even, the remaining case being analogous. Without loss of generality, we may assume that  $a(1)\ge 0$.

Note that $q_{P,a} \circ P$ also attains its norm at $-a$ and $q_{P,a} \circ P(-a)=\|P\|^M$. By  Lemma~\ref{ppio mod max real},
\begin{equation*}
\limsup_n \phi(-a,\dots,-a,e_n,\overset{M}{\dots},e_n) \leq 0.
\end{equation*}
Then $\liminf_n \phi(a,\dots,a,e_n,\overset{M}{\dots},e_n) \ge 0$ and therefore,
by \eqref{NMlineal phi},
\begin{equation}\label{NM lineal psi da 0}
\lim_n  \phi(a,\dots,a,e_n,\overset{M}{\dots},e_n) = 0.
\end{equation}
If $\lambda_n=1$ for infinitely many $n$'s, using \eqref{NMlineal psi} and the limit above  we again obtain $|a(1)|^{M(N-1)} < \binom{NM}{M} \varepsilon$. Thus, we may proceed as in the previous case to get $\Vert Q\Vert=0$, a contradiction.

Suppose that $\lambda_n=1$ for only finitely many $n$'s. Since
$$
{\textstyle \binom{NM}{M}} \psi(-a,\dots,-a,e_n,\overset{M}{\dots},e_n) = - \lambda_n^M a(1)^{M(N-1)}.$$
we have
$$
{\textstyle \binom{NM}{M}} \limsup_n \psi(-a,\dots,-a,e_n,\overset{M}{\dots},e_n) = \vert a(1)\vert^{M(N-1)}.
$$
Together with \eqref{NM lineal psi da 0}, this implies that $|a(1)|^{M(N-1)} < \binom{NM}{M} \varepsilon$. Thus, we again derive that $\Vert Q\Vert=0$, whence the result follows by contradiction.
\end{proof}

The previous proposition shows  that, given any admissible sequence $w$ in $\ell_r$, $1<r<\infty$, there exists a dual space $Y'$ such that $\mathcal{P}(^Nd_*(w,1); Y')$ satisfies the polynomial Lindenstrauss theorem, but not the Bishop-Phelps theorem, for all $N \in \mathbb{N}$. This somehow extends Example~\ref{primer ejemplo}.

In the complex case, the proof of Proposition~\ref{no vale BP} works if we consider $\ell_r$ instead of $\ell_M$ for $M < r < \infty$. In other words,  $\mathcal{NAP}(^Nd_*(w,1); \ell_r)$ is not dense in $\mathcal{P}(^Nd_*(w,1); \ell_r)$, for any $N \in \mathbb{N}$ and  $M \le r < \infty$. Also, taking $Z$ a renorming of $c_0$ such that its bidual  is strictly convex, the polynomial $Q$ considered above is well defined from $d_*(w,1)$ to $Z''$ regardless of $w$ belonging to some $\ell_r$. In consequence,  $\mathcal{NAP}(^Nd_*(w,1); Z'')$ is not dense in $\mathcal{P}(^Nd_*(w,1); Z'')$, for any $N \in \mathbb{N}$ and  any admissible sequence $w$. On the other hand, the polynomial Lindenstrauss theorem holds in all these situations.

\section{On a quantitative version of the Lindenstrauss theorem} \label{no vale lindenstrauss-bollobas}

There is a quantitative version of the Bishop-Phelps theorem, due to Bollob\'as \cite{Boll} which states that, for any Banach space $X$, once we fix a linear functional $\varphi \in S_{X'}$ and $\tilde x  \in S_X$ such that $\varphi(\tilde x )$ is close enough to $1$,  it is possible to find a linear functional $\psi \in S_{X'}$ attaining its norm at some $a \in S_X$, such that, simultaneously, $\tilde x $ is close enough to $a$ and $\varphi$   is close enough to  $\psi$.

Suppose we have  Banach spaces $X$ and $Y$, for which the Bishop-Phelps theorem holds for $\mathcal L(X;Y)$, $\mathcal L(^N X;Y)$ or $\mathcal P(^N X;Y)$.  Is it possible to obtain a quantitative version of the theorem in any of these situations? This question was first posed and studied in the context of linear operators by Acosta, Aron, Garc\'{\i}a and Maestre  \cite{AAGM}. The authors show that a Bishop-Phelps-Bollob\'as theorem  holds  for $\mathcal L(\ell_1; Y)$ if and only if $Y$ has AHSP (approximate hyperplane series property). This last property is satisfied by  finite-dimensional Banach spaces, $L_1(\mu)$  for a $\sigma$-finite measure $\mu$, $C(K)$ spaces and uniformly convex Banach spaces. In particular, $\mathcal L(\ell_1, \ell_\infty)$ satisfies the Bishop-Phelps-Bollob\'as theorem. Continuing this line of research, Choi and Song extended the question  to the bilinear case \cite{ChoiSong}. Here the authors show that there is no Bishop-Phelps-Bollob\'as theorem for scalar bilinear forms on $\ell_1 \times \ell_1$, in contrast to the positive result for $\mathcal L(\ell_1, \ell_\infty)$. This should be compared to the already mentioned results of \cite{Choi97} and \cite{FinPay98}.

As stated in the introduction, the corresponding quantitative version of Lindenstrauss theorem for operators was addressed in \cite{AAGM}.  We devote this section to show that  there is no Lindenstrauss-Bollob\'{a}s theorem for multilinear mappings and polynomials on preduals of Lorentz sequence spaces. Before going on, some definitions are in order.

Following \cite{AAGM}, we say that the Bishop-Phelps-Bollob\'as theorem holds for $\mathcal{L}(X;Y)$, if for any $\varepsilon >0$ there are $\eta(\varepsilon), \beta(\varepsilon)>0$ (with $\beta(t) \xrightarrow[t \rightarrow 0]\,0$) such that given $T \in S_{\mathcal{L}(X;Y)}$ and $\tilde x  \in S_X$ with $\Vert T(\tilde x) \Vert > 1 - \eta(\varepsilon)$,  there exist $S \in S_{\mathcal{L}(X;Y)}$ and $a \in S_X$ satisfying:
$$
\Vert S(a)\Vert = 1, \quad \Vert a - \tilde x \Vert < \beta(\varepsilon) \quad \text{and} \quad \Vert S -T\Vert < \varepsilon.
$$

Regarding  Lindenstrauss-Bollob\'as type theorems, we say that the theorem holds for $\mathcal{L}(X;Y)$  if, with $\varepsilon, \eta$ and $\beta$ as above, given $T \in S_{\mathcal{L}(X;Y)}$ and  $\tilde x \in S_X$ with $\Vert T(\tilde x)\Vert > 1 - \eta(\varepsilon)$, there exist $S \in S_{\mathcal{L}(X;Y)}$ and $a \in S_{X''}$ satisfying:
$$
\Vert S'' (a)\Vert = 1,\ \Vert a - \tilde x \Vert < \beta(\varepsilon)\quad \text{and}\quad \Vert S -T\Vert < \varepsilon.
$$
More generally, we say that the Lindenstrauss-Bollob\'as theorem holds for $\mathcal{L}(^N X_1, \ldots, X_N; Y)$, if
given $\Phi \in \mathcal{L}(^N X_1, \ldots, X_N;Y)$ of norm 1, and $\tilde x_j \in S_{X_j}$, $j=1,\ldots, N$, with $\Vert \Phi(\tilde x_1,\ldots,\tilde x_N)\Vert > 1 - \eta(\varepsilon)$,  there exist $\Psi \in S_{\mathcal{L}(^N X_1, \ldots, X_N ;Y)}$ and $a_j \in S_{X_j''}$,  $j=1,\ldots, N$, satisfying that all the Arens extensions of $\Psi$ attain the norm at $(a_1,\ldots,a_N)$, $\Vert a_j - \tilde x_j\Vert < \beta(\varepsilon)$ for $j=1,\ldots,N$ and $\Vert \Phi -\Psi\Vert < \varepsilon$.
The polynomial version of the Lindenstrauss-Bollob\'{a}s theorem can be stated by analogy.

\begin{remark}\rm
Regarding the multilinear Lindenstrauss-Bollob\'{a}s theorem, we could have required a formally weaker condition on  $\Psi$: that merely \emph{one} of its Arens extensions attain its norm at $(a_1,\ldots,a_N)$. We do not know if the Lindenstrauss-Bollob\'{a}s theorem corresponding to this condition is equivalent to the former one. Anyway, we will see in Proposition~\ref{no vale LB multilineal} than even this weaker form of the theorem fails.
\end{remark}

As in Section \ref{ejemplos en preduales de lorentz}, a Banach space  with few extreme points will provide us with the proper environment to construct our counterexamples.  We state as a lemma the following  known result whose proof is similar to those of \cite[Lemma~2.2]{JP} and \cite[Proposition~4]{Lind}.

\begin{lemma}\label{lema extremales}
Let $X_1,\ldots,X_N$ be Banach sequence spaces and let $Y$ be a strictly convex Banach space.
If $\Phi \in \mathcal{L}(^N X_1, \ldots, X_N ;Y)$ attains its norm at $(a_1,\ldots,a_N) \in B_{X_1} \times \cdots\times B_{X_N}$ with $a_1,\ldots,a_N$ satisfying \eqref{propiedad extremal}, then there exists $n_0\in \N$ such that  $\Phi(e_{n_1},\ldots,e_{n_N}) =0$, for all $n_1,\ldots,n_N \geq n_0$.
\end{lemma}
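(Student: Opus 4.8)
The plan is to follow the scheme of \cite[Lemma~2.2]{JP} and \cite[Proposition~4]{Lind}, using strict convexity of $Y$ purely as a rigidity device. If $\Phi=0$ there is nothing to prove, so we may assume $\|\Phi\|=1$. Put $y_0:=\Phi(a_1,\ldots,a_N)$, so $\|y_0\|=1$. Applying \eqref{propiedad extremal} to each $a_j$ we obtain $n_0^{(j)}\in\N$ and $\delta_j>0$; set $n_0:=\max_j n_0^{(j)}$ and $\delta:=\min_j\delta_j>0$, so that $a_j+\lambda e_n\in B_{X_j}$ whenever $|\lambda|\le\delta$ and $n\ge n_0$. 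Fix once and for all indices $n_1,\ldots,n_N\ge n_0$.

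For $S\subseteq\{1,\ldots,N\}$ let $\Phi_S$ denote $\Phi$ evaluated with $e_{n_j}$ in the $j$-th slot for $j\in S$ and $a_j$ in the $j$-th slot for $j\notin S$; thus $\Phi_\emptyset=y_0$, while $\Phi_{\{1,\ldots,N\}}=\Phi(e_{n_1},\ldots,e_{n_N})$ is precisely what we must show vanishes. I will prove by induction on $|S|$ that $\Phi_S=0$ for every nonempty $S$. The only analytic input is the elementary remark that if $\rho\in Y$ satisfies $\|y_0+\rho\|\le 1$ and $\|y_0-\rho\|\le 1$, then from $1=\|y_0\|\le\tfrac12\|y_0+\rho\|+\tfrac12\|y_0-\rho\|\le 1$ we get $\|y_0+\rho\|=\|y_0-\rho\|=1$, so $y_0$ is the midpoint of two points of the unit sphere, and strict convexity of $Y$ forces $y_0+\rho=y_0-\rho$, i.e. $\rho=0$.

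For the base case $S=\{i\}$, expanding $\Phi(a_1,\ldots,a_{i-1},a_i+\lambda e_{n_i},a_{i+1},\ldots,a_N)=y_0+\lambda\Phi_{\{i\}}$ and evaluating at $\lambda=\pm\delta$ gives two vectors of norm $\le 1$, so the remark (with $\rho=\delta\Phi_{\{i\}}$) yields $\Phi_{\{i\}}=0$. For the inductive step, let $|S|=k\ge 2$, write $S=\{i_1,\ldots,i_k\}$, and assume $\Phi_T=0$ whenever $1\le|T|<k$. Evaluate $\Phi$ with $a_{i_m}+\lambda_m e_{n_{i_m}}$ in slot $i_m$ ($m=1,\ldots,k$, $|\lambda_m|\le\delta$) and $a_j$ elsewhere; multilinear expansion together with the inductive hypothesis annihilates all terms of intermediate order, leaving $y_0+(\lambda_1\cdots\lambda_k)\Phi_S$, again of norm $\le 1$. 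Choosing $\lambda_1=\cdots=\lambda_{k-1}=\delta$ and $\lambda_k=\pm\delta$ and applying the remark with $\rho=\delta^k\Phi_S$ gives $\Phi_S=0$. Taking $S=\{1,\ldots,N\}$ completes the argument.

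The proof is essentially bookkeeping, and I expect no genuine obstacle. The two points that require care are (i) extracting a \emph{single} pair $(n_0,\delta)$ valid simultaneously in all $N$ coordinates from the $N$ instances of \eqref{propiedad extremal}, and (ii) arranging the induction on $|S|$ so that the multilinear expansion collapses to the two-term form $y_0+(\lambda_1\cdots\lambda_k)\Phi_S$; it is exactly the vanishing of the lower-order $\Phi_T$ that keeps the strict-convexity trick available at every stage.
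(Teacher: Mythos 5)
Your proof is correct and follows exactly the argument the paper has in mind: the paper omits the proof of Lemma~\ref{lema extremales}, referring instead to \cite[Lemma~2.2]{JP} and \cite[Proposition~4]{Lind}, and your induction on the set of slots occupied by basis vectors, combined with the strict-convexity midpoint rigidity $\Vert y_0\pm\rho\Vert\le 1=\Vert y_0\Vert\Rightarrow\rho=0$, is precisely that scheme. The two points you flag (a single uniform pair $(n_0,\delta)$ and the collapse of the multilinear expansion via the inductive vanishing of the lower-order terms) are handled correctly, so nothing is missing.
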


Our next lemma shows that elements in $B_{d^*(w,1)}$ which are \textit{close} to elements in $B_{d_*(w,1)}$, satisfy condition~\eqref{propiedad extremal}.

\begin{lemma} \label{lema 2} Let $w$ be an admissible sequence.
Let $z \in B_{d^*(w,1)}$ and suppose there exists $x  \in d_*(w,1)$  such that  $\Vert z - x\Vert_W < \frac 12$. Then, $z$ satisfies \eqref{propiedad extremal}, that is:
there exist $\delta >0$ and $n_0 \in \mathbb{N}$ such that
$$
\Vert z + \lambda e_n\Vert_W \leq 1,  \quad \text{for all}\quad  |\lambda|\le \delta \quad\text{and all}\quad n \geq n_0.
$$
\end{lemma}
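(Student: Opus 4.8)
The plan is to estimate, for each $m\in\N$, the truncated sum $S_m(v):=\sum_{i=1}^m v^*(i)=\sup\{\sum_{i\in A}|v(i)|:|A|=m\}$ applied to $v=z+\lambda e_n$, and to show that $S_m(z+\lambda e_n)\le W(m)$ for \emph{every} $m$ as soon as $n$ is large and $|\lambda|$ is small; this is exactly the assertion $\|z+\lambda e_n\|_W\le 1$.

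First I would record the elementary bound obtained by testing the supremum defining $S_m(z+\lambda e_n)$ against the two kinds of index sets $A$ with $|A|=m$: those that avoid $n$ (contributing at most $S_m(z)$) and those containing $n$ (contributing at most $|z(n)|+|\lambda|+S_{m-1}(z)$). This gives
\[
S_m(z+\lambda e_n)\le\max\{\,S_m(z)\,,\;|z(n)|+|\lambda|+S_{m-1}(z)\,\}.
\]
Since $z\in B_{d^*(w,1)}$ we have $S_k(z)\le W(k)\|z\|_W\le W(k)$ for all $k$, so the first alternative is harmless and the whole task reduces to controlling $|z(n)|+|\lambda|+S_{m-1}(z)$.

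Next I would split according to the size of $m$. For large $m$ I use the hypothesis $x\in d_*(w,1)$: the map $v\mapsto S_k(v)$ is subadditive, so from $z=x+(z-x)$ and $\|z-x\|_W<\tfrac12$ one gets $S_{m-1}(z)\le S_{m-1}(x)+\tfrac12 W(m-1)$; and since $S_{m-1}(x)/W(m-1)\to 0$, there is $m_1$ with $S_{m-1}(z)\le\tfrac34 W(m-1)\le\tfrac34 W(m)$ for all $m>m_1$. As $W$ is increasing with $W(m_1+1)>0$ and $z\in d^*(w,1)\subseteq c_0$ (so $|z(n)|\to 0$), I can choose $\delta_1>0$ and $n_1$ with $|z(n)|+|\lambda|\le\tfrac14 W(m_1+1)$ for $|\lambda|\le\delta_1$, $n\ge n_1$, which settles all $m>m_1$. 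For the finitely many $m\le m_1$ I instead use the crude bound $S_{m-1}(z)\le W(m-1)=W(m)-w_m$ and pick $\delta_2>0$, $n_2$ with $|z(n)|+|\lambda|\le w_{m_1}\le w_m$ there; this uses that all $w_m$ are strictly positive (otherwise $\sum_i w_i<\infty$, contradicting admissibility). Taking $\delta=\min\{\delta_1,\delta_2\}$ and $n_0=\max\{n_1,n_2\}$ then yields $S_m(z+\lambda e_n)\le W(m)$ for every $m$, hence $\|z+\lambda e_n\|_W\le 1$.

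The only genuinely delicate point is this two-range split: the crude estimate $S_{m-1}(z)\le W(m-1)$ loses exactly the term $w_m\to 0$, which is insufficient for large $m$, so there one must invoke the defining $o(W(m))$ property of $d_*(w,1)$; for small $m$ that property gives nothing quantitative and one falls back on $w_m\ge w_{m_1}>0$. Everything else — subadditivity of the functionals $S_k$, the inequality $\|z\|_W\le1$, and membership of $z$ and $x$ in $c_0$ — is routine.
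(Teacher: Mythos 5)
Your proof is correct. The global strategy is the same as the paper's --- exploit the $o(W(n))$ decay of $\sum_{i\le n}x^*(i)$ coming from $x\in d_*(w,1)$, the margin $\|z-x\|_W<\tfrac12$, the fact that $z\in c_0$, and then split the range of $m$ --- but your technical execution is genuinely different and arguably cleaner. The paper compares the decreasing rearrangements of $z+\lambda e_n$ and $z$ directly: it first disposes of the case where some $z^*(i)=0$ (then $z\in d_*(w,1)$ and the known extremal property applies), then locates the first index $n_2>n_1$ where $z^*$ strictly drops, chooses $\delta$ so that $|z(n)+\lambda|<z^*(n_2-1)$ for $n$ beyond the supports of the top $n_2$ entries, and concludes that for $m<n_2$ the top-$m$ sums of $z+\lambda e_n$ and $z$ coincide exactly, while for $m\ge n_2$ they differ by at most $\delta$, absorbed by the bound $\sum_{i\le m}z^*(i)<2\rho W(m)$ with $2\rho<1$. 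Your variational identity $S_m(v)=\sup\{\sum_{i\in A}|v(i)|:|A|=m\}$ and the resulting subadditivity replace all of this rearrangement bookkeeping with the single inequality $S_m(z+\lambda e_n)\le\max\{S_m(z),\,|z(n)|+|\lambda|+S_{m-1}(z)\}$, which removes the case distinction on zeros of $z^*$ and the need for the index $n_2$. The price is your two-range split in $m$ (where the paper's split is at $n_2$): for large $m$ you use the $\tfrac34$-versus-$\tfrac14$ budget, and for the finitely many small $m$ you fall back on $S_{m-1}(z)\le W(m)-w_m$ together with $w_m\ge w_{m_1}>0$ (correctly justified by admissibility, since $\sum_i w_i=\infty$ forces all $w_i>0$). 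Both arguments are complete; yours trades the paper's exact identification of the top-$m$ sums for small $m$ against a slightly lossier but more uniform estimate.
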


\begin{proof} If there exists $i \in \mathbb{N}$ so that $z^*(i)=0$ then $z\in d_*(w,1)$ and the result follows.  Then, we may suppose $z^*(i)> 0$, for all $i \in \mathbb{N}$.  Choose $\rho>0$ such that $\Vert z - x\Vert_W < \rho < \frac 12$.
Since $x \in d_*(w,1)$, there exists $n_1 \in \mathbb{N}$ such that, for all $n \geq n_1$,
$$
\frac{\sum_{i=1}^n x^*(i)}{W(n)} < \rho \quad \text{and} \quad \frac{\sum_{i=1}^n (z-x)^*(i)}{W(n)} < \rho.
$$
Then,
\begin{equation} \label{ec1 lema 2}
\sum_{i=1}^n z^*(i)
\leq \sum_{i=1}^n (z-x)^*(i) + \sum_{i=1}^n x^*(i) < 2 \rho W(n),
\end{equation}
for all $n \geq n_1$.

Let $n_2$ be the smallest natural number satisfying $n_2 > n_1$ and $z^*(n_2) < z^*(n_2-1)$. By \eqref{ec1 lema 2} and the choice of $\rho$ we may take $\delta>0$ such that
\begin{equation*}
z^*(n_2) + \delta < z^*(n_2-1) \qquad \text{and} \qquad \frac{\sum_{i=1}^n z^*(i) + \delta}{W(n)} < 1, \quad \text{ for all } n \geq n_1.
\end{equation*}
Let $\sigma\colon \mathbb{N} \rightarrow \mathbb{N}$ be an injective mapping satisfying $z^* = (\vert z(\sigma(i))\vert)_i$ and take $n_0 > \max\{\sigma(1),\ldots,\sigma(n_2)\}$. Let us show that $\Vert z + \lambda e_n\Vert_W \leq 1$ for all  $|\lambda|<\delta$ and all $n \geq n_0$. Note that if $n \geq n_0$, then $\vert z(n)\vert \leq z^*(n_2)$  and
\begin{equation*}
\vert z(n)  + \lambda \vert  < \vert z(n)\vert + \delta \leq z^*(n_2) + \delta < z^*(n_2-1) \leq z^*(n_1) \leq \dots \leq z^*(1).
\end{equation*}
If $m < n_2$, then
$$
\sum_{i=1}^m (z +\lambda e_n)^*(i) = \sum_{i=1}^m z^*(i) \le W(m).
$$
On the other hand, if  $m \ge n_2$,
$$
\sum_{i=1}^m (z +\lambda e_n)^*(i) \le \sum_{i=1}^m z^*(i) + \sum_{i=1}^m (\lambda e_n)^*(i) \le
\sum_{i=1}^m z^*(i) + \delta < W(m).
$$
Thus, the result follows.
\end{proof}

Now we are ready to show that there is no Lindenstrauss-Bollob\'{a}s theorem for multilinear forms or multilinear operators on preduals of Lorentz sequence spaces.

\begin{proposition} \label{no vale LB multilineal} Let $w$ be an admissible sequence  in $\ell_r$, for some $1<r<\infty$.  There is no Lindenstrauss-Bollob\'as theorem in the following cases:
\begin{enumerate}
 \item[\rm (a)]  for $\mathcal{L}(^Nd_*(w,1))$, if  $N \geq r$;

\item[\rm (b)] for $\mathcal{L}(^Nd_*(w,1);\ell_{r})$,   if $N \in \mathbb{N}$.
\end{enumerate}
\end{proposition}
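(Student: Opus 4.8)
The plan is to prove both statements at once by contradiction, using Lemmas~\ref{lema extremales} and~\ref{lema 2}. In each case I would exhibit a single norm-one $N$-linear map, defined \emph{diagonally} with respect to the canonical basis of $d_*(w,1)$, that takes a fixed nonzero value on the whole basis diagonal $(e_n,\dots,e_n)_n$; the Lindenstrauss--Bollob\'as conclusion will then be shown to force every sufficiently close $N$-linear map to vanish on that diagonal from some index on, which is impossible.

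Concretely: for (a), since $w\in\ell_r$ with $N\ge r$, the inclusions $d_*(w,1)\hookrightarrow\ell_r\hookrightarrow\ell_N$ from \eqref{inclusiones} together with H\"older's inequality ($N$ equal exponents) make $\Phi_0(x_1,\dots,x_N)=\sum_i x_1(i)\cdots x_N(i)$ a bounded $N$-linear form on $d_*(w,1)$, and I set $\Phi=\Phi_0/\|\Phi_0\|$. For (b), the same estimate — now with no restriction on $N$, since $(\prod_j x_j(i))_i\in\ell_{r/N}\subseteq\ell_r$ — makes $\Psi_0(x_1,\dots,x_N)=(x_1(i)\cdots x_N(i))_i$ a bounded $N$-linear map $d_*(w,1)^N\to\ell_r$, and I set $\Psi=\Psi_0/\|\Psi_0\|$. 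In both cases the range ($\K$, resp.\ $\ell_r$ with $1<r<\infty$) is reflexive and strictly convex, the bidual of $d_*(w,1)$ is the Banach sequence space $d^*(w,1)$, each Arens extension is a norm-one $N$-linear map on $d^*(w,1)$ restricting to the original on $d_*(w,1)\subseteq d^*(w,1)$, and $e_n\in S_{d_*(w,1)}$ with $\Phi(e_n,\dots,e_n)=1/\|\Phi_0\|$ (resp.\ $\|\Psi(e_n,\dots,e_n)\|_r=1/\|\Psi_0\|$) for every $n$.

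Next I would assume the theorem holds — by the remark above it suffices to contradict the weak form, in which only \emph{one} Arens extension need attain its norm — with moduli $\eta,\beta$, and fix $\varepsilon>0$ small enough that $\beta(\varepsilon)<\frac12$ and $\varepsilon<1/\|\Phi_0\|$ (resp.\ $<1/\|\Psi_0\|$). Since the map has norm $1$, pick $\tilde x_1,\dots,\tilde x_N\in S_{d_*(w,1)}$ with $\|\Phi(\tilde x_1,\dots,\tilde x_N)\|>1-\eta(\varepsilon)$. The theorem then produces a norm-one $\Psi'$, points $a_j\in S_{d^*(w,1)}$ with $\|a_j-\tilde x_j\|_W<\beta(\varepsilon)$, $\|\Psi'-\Phi\|<\varepsilon$, and an Arens extension $\overline{\Psi'}$ attaining its norm at $(a_1,\dots,a_N)$. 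As $\tilde x_j\in d_*(w,1)$ and $\|a_j-\tilde x_j\|_W<\frac12$, Lemma~\ref{lema 2} shows each $a_j$ satisfies \eqref{propiedad extremal}; then Lemma~\ref{lema extremales}, applied to the norm-one $N$-linear map $\overline{\Psi'}$ on the sequence spaces $d^*(w,1)$ with strictly convex range, yields $n_0$ with $\overline{\Psi'}(e_{n_1},\dots,e_{n_N})=0$ whenever $n_1,\dots,n_N\ge n_0$, hence $\Psi'(e_n,\dots,e_n)=0$ for all $n\ge n_0$. This contradicts $\|\Psi'(e_n,\dots,e_n)\|\ge\|\Phi(e_n,\dots,e_n)\|-\|\Psi'-\Phi\|>1/\|\Phi_0\|-\varepsilon>0$. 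Case (b) runs verbatim with $\Psi,\Psi_0,\ell_r$ replacing $\Phi,\Phi_0,\K$.

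The genuinely routine steps are the H\"older estimate for boundedness of the diagonal maps (with the observation that case (a) needs exactly $N\ge r$) and the fact that their Arens extensions remain diagonal on $d^*(w,1)$ and restrict correctly. The point requiring care — and, I expect, the only real obstacle — is the interplay of the constants: $\varepsilon$ must be chosen small enough that $\beta(\varepsilon)$ falls below the threshold $\frac12$ of Lemma~\ref{lema 2} while remaining below the value $1/\|\Phi_0\|$ that the map takes uniformly along the basis diagonal, and the near-optimal vectors $\tilde x_j$ must be taken in $S_{d_*(w,1)}$ (so that the nearby bidual points $a_j$ lie within $\frac12$ of an element of $d_*(w,1)$), which is legitimate precisely because the map was normalized to have norm one.
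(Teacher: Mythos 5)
Your proposal is correct and follows essentially the same route as the paper's proof: the same diagonal $N$-linear maps $\sum_i x_1(i)\cdots x_N(i)$ and $(x_1(i)\cdots x_N(i))_i$, the same use of Lemma~\ref{lema 2} to place the points $a_j$ under condition~\eqref{propiedad extremal}, and the same application of Lemma~\ref{lema extremales} to force the approximating map to vanish on the basis diagonal, contradicting its proximity to a map that is constantly nonzero there. The only differences are cosmetic (normalizing to norm one and tracking the threshold $\varepsilon<1/\|\Phi_0\|$ explicitly, which the paper leaves implicit).
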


\begin{proof}
(a) Fix $N \geq r$. Since $w \in \ell_r$, we may consider $\phi \in \mathcal{L}(^Nd_*(w,1))$ defined by $\phi(x_1,\ldots,x_N) = \sum_{i=1}^\infty x_1(i)\cdots x_N(i)$. Suppose that the Lindenstrauss-Bollob\'as theorem holds.

Take $0<\varepsilon<1$, $\eta(\varepsilon)$ and $\beta(\varepsilon)$ as in the definition  and let $\tilde x_1,\ldots,\tilde x_N \in B_{d_*(w,1)}$ be such that $\vert \phi(\tilde x_1,\ldots,\tilde x_N)\vert > \Vert \phi\Vert - \eta(\varepsilon)$. Then, there exists a multilinear mapping $\psi \in \mathcal{L}(^Nd_*(w,1))$, with $\Vert \phi - \psi\Vert<\varepsilon$, whose  Arens extensions attain the norm at  $(a_1,\ldots,a_N) \in B_{d^*(w,1)} \times \cdots \times B_{d^*(w,1)}$ and
\begin{equation*}
\Vert a_j - \tilde x_j\Vert_W < \beta(\varepsilon), \quad  \text{for all }\ 1\leq j \leq N.
\end{equation*}
With $\varepsilon$ sufficiently small, Lemma~\ref{lema 2} implies that  each $a_j$ satisfies  \eqref{propiedad extremal}. By Lemma~\ref{lema extremales}, $\psi(e_n,\ldots,e_n)=0$ for $n$ large enough. Since $\phi(e_n,\ldots,e_n) =1$ and $\Vert \phi - \psi\Vert < \varepsilon$, we get a contradiction and the statement follows.

(b) Fix $N \geq 1$. Since $w\in\ell_r$, the multilinear mapping $\Phi \in \mathcal{L}(^Nd_*(w,1); \ell_r)$ given by $\Phi(x_1,\dots,x_N) = (x_1(i)\cdots x_N(i))_{i \in \mathbb{N}}$, is well defined. Now, the result follows reasoning as before.
\end{proof}

Note that if $w\in \ell_2$, then $d_*(w,1)$ provides us with an example of a Banach space on which
the Lindenstrauss-Bollob\'as theorem fails for scalar multilinear forms of any degree other that 1.
Also, part (b) of the previous proof shows that, for $w\in\ell_r$,  the canonical inclusion $d_*(w,1) \hookrightarrow \ell_r$ cannot be approximated by linear mappings whose bitransposes are norm attaining. This example should be compared with
\cite[Example~6.3]{AAGM}.

Finally, we observe that in the proof of Proposition~\ref{no vale LB multilineal} it is enough to assume that just one of the  Arens extensions of $\psi$ attains its norm.

\bigskip

Now, we focus our attention on the polynomial version of the Lindenstrauss-Bollob\'as theorem. The following result extends Lemma~\ref{ppio mod max real} and its proof can be extracted  from that of \cite[Theorem~3.2]{JP}.

\begin{lemma} \label{generalizacion lema}
For the  real case, let $w$ be an admissible sequence in $\ell_N$, $N\ge 2$ and take $M$ the smallest natural number such that $w \in \ell_M$. Suppose that $p \in \mathcal{P}(^N d^*(w,1))$ attains its norm at
$a \in B_{d^*(w,1)}$ which satisfies condition \eqref{propiedad extremal} and let $\phi$ be the symmetric $N$-linear form associated to $p$.
\begin{enumerate}
\item[(i)] If $p(a) >0$\quad then\quad  $\limsup_n \phi(a,\dots,a,e_n,\overset{M}{\dots},e_n) \leq 0$.
\item[(ii)] If $p(a) <0$\quad  then\quad  $\liminf_n \phi(a,\dots,a,e_n,\overset{M}{\dots},e_n) \geq 0$.
\end{enumerate}
\end{lemma}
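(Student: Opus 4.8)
The plan is to mimic the argument already used for Lemma~\ref{ppio mod max real}, which treats the case $a\in B_{d_*(w,1)}$, and to check that the only place where that hypothesis was used is precisely where condition~\eqref{propiedad extremal} enters. Recall that \eqref{propiedad extremal} holds automatically for every point of $B_{d_*(w,1)}$ by \cite[Lemma~2.2]{JP}, so in Lemma~\ref{ppio mod max real} it was invoked implicitly; here we are working in the larger ball $B_{d^*(w,1)}$, where \eqref{propiedad extremal} is an extra assumption we impose, and the point of the lemma is that nothing else is needed.

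First I would fix $a\in B_{d^*(w,1)}$ satisfying \eqref{propiedad extremal}, so there are $n_0\in\N$ and $\delta>0$ with $\|a+\lambda e_n\|_W\le 1$ for all $|\lambda|\le\delta$ and all $n\ge n_0$. Assume $p(a)>0$ (case (ii) being symmetric, replacing $a$ by $-a$ when $N$ is odd or arguing directly when $N$ is even). For $n\ge n_0$ and $|\lambda|\le\delta$ consider the scalar function $t\mapsto p(a+t e_n)$ on $[-\delta,\delta]$; since $\|a+te_n\|_W\le 1$ we have $p(a+te_n)\le\|p\|=p(a)$, so $t=0$ is an interior maximum of this polynomial in $t$. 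Expanding by the binomial/Leibniz formula for homogeneous polynomials,
\begin{equation*}
p(a+t e_n)=\sum_{k=0}^{N}\binom{N}{k} t^{k}\,\phi(a,\overset{N-k}{\dots},a,e_n,\overset{k}{\dots},e_n),
\end{equation*}
and the vanishing of the first $M-1$ derivatives at $t=0$ together with the sign of the $M$-th derivative at an interior maximum will force $\binom{N}{M}\phi(a,\dots,a,e_n,\overset{M}{\dots},e_n)\le 0$ once $n\ge n_0$. To make this rigorous one argues exactly as in \cite[Theorem~3.2]{JP}: by the definition of $M$ as the smallest index with $w\in\ell_M$, the lower-order coefficients $\phi(a,\dots,a,e_n,\overset{k}{\dots},e_n)$ for $1\le k<M$ tend to $0$ as $n\to\infty$ (they are controlled by tails of $w$ in $\ell_k$-type norms, which do not converge), so the term that actually governs the local behaviour near $t=0$ for large $n$ is the $k=M$ term; the interior-maximum condition then yields $\limsup_n\phi(a,\dots,a,e_n,\overset{M}{\dots},e_n)\le 0$, which is (i).

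The main obstacle is the bookkeeping showing that the coefficients with $1\le k<M$ are asymptotically negligible while the $k=M$ coefficient is the decisive one; this is where admissibility of $w$ and the minimality of $M$ with $w\in\ell_M$ are used, and it is carried out in the proof of \cite[Theorem~3.2]{JP}, from which the present argument can be extracted almost verbatim. The only genuinely new point here, compared with Lemma~\ref{ppio mod max real}, is that we no longer have $a$ in the predual, so \eqref{propiedad extremal} must be assumed rather than deduced; since the proof of \cite[Theorem~3.2]{JP} only ever uses that $\|a+\lambda e_n\|_W\le 1$ for small $\lambda$ and large $n$ — i.e. exactly \eqref{propiedad extremal} — the extension to $a\in B_{d^*(w,1)}$ satisfying \eqref{propiedad extremal} is immediate. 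For part (ii), if $N$ is even one repeats the argument with the roles of maxima reversed (working with $-\|p\|$), and if $N$ is odd one notes that $p(-a)<0$ would give $p$ attaining $-\|p\|$ at $-a$... more simply, apply (i) to the polynomial $-p$, which attains its norm at the same point $a$ with $(-p)(a)>0$, and translate back; this gives $\liminf_n\phi(a,\dots,a,e_n,\overset{M}{\dots},e_n)\ge 0$.
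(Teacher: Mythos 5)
Your proposal is correct and coincides with the paper's own treatment: the paper gives no self-contained argument either, stating only that the proof ``can be extracted from that of \cite[Theorem~3.2]{JP}'', and your central observation---that the only role of the hypothesis $a\in B_{d_*(w,1)}$ in Lemma~\ref{ppio mod max real} is to guarantee condition~\eqref{propiedad extremal}, which is here assumed directly for $a\in B_{d^*(w,1)}$---is exactly the point of the generalization. The one blemish is the intermediate assertion that $\binom{N}{M}\phi(a,\dots,a,e_n,\overset{M}{\dots},e_n)\le 0$ for each fixed $n\ge n_0$ (the interior maximum only forces the first derivative to vanish and the second to be nonpositive for fixed $n$); your subsequent asymptotic formulation, resting on the fact that the coefficients of order $1\le k<M$ tend to zero because $w\notin\ell_k$, is the correct statement and is what the argument of \cite{JP} actually yields.
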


\begin{proposition} \label{no vale LB polinomial} Let $w$ be an admissible sequence and suppose $M$ is the smallest natural number such that $w\in \ell_M$ (we assume such an $M$ exists). There is no Lindenstrauss-Bollob\'as theorem in the following cases:

\begin{enumerate}
 \item[\rm (a)] for $\mathcal{P}(^Nd_*(w,1))$, for all $N \geq M$;

\item[\rm (b)]  for $\mathcal{P}(^Nd_*(w,1);\ell_{M})$, for all $N \in \mathbb{N}$.
\end{enumerate}
\end{proposition}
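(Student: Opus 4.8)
The plan is to argue by contradiction in both cases, following the scheme of Proposition~\ref{no vale BP} but routing the approximating polynomial through the bidual. Assuming the polynomial Lindenstrauss--Bollob\'as theorem holds for one of the spaces in the statement, then, for $\varepsilon>0$, a suitable norm-one polynomial $T$, and $\tilde x\in S_{d_*(w,1)}$ with $\|T(\tilde x)\|>1-\eta(\varepsilon)$, it produces $S$ with $\|S-T\|<\varepsilon$ and a point $a\in S_{d^*(w,1)}$ (recall $d_*(w,1)''=d^*(w,1)$, a Banach sequence space) at which $\overline S$ attains its norm and with $\|a-\tilde x\|_W<\beta(\varepsilon)$. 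The key observation is that, for $\varepsilon$ small, $\beta(\varepsilon)<\frac12$, so by Lemma~\ref{lema 2} the point $a$ satisfies the extremal condition~\eqref{propiedad extremal}; this is precisely what is needed to feed $\overline S$ into Lemma~\ref{lema extremales poli} (complex field) or Lemma~\ref{generalizacion lema} (real field), exactly as Proposition~\ref{no vale BP} used those lemmas on polynomials over $d_*(w,1)$ itself. Thus the whole proof reduces to replaying Proposition~\ref{no vale BP} with $P$ replaced by $\overline S$ and Lemma~\ref{ppio mod max real} replaced by Lemma~\ref{generalizacion lema}.

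For part~(b) this is almost mechanical. In the complex case I would take $T=Q/\|Q\|$ with $Q(x)=(x(i)^N)_i$, well defined into $\ell_M$ by~\eqref{inclusiones}, and $\tilde x\in S_{d_*(w,1)}$ with $\|T(\tilde x)\|>1-\eta(\varepsilon)$. Since $\ell_M$ is strictly convex ($M\ge2$), Lemma~\ref{lema extremales poli} applied to $\overline S\in\mathcal P(^Nd^*(w,1);\ell_M)$ (which attains its norm at the extremal point $a$) gives $S(e_n)=\overline S(e_n)=0$ for all large $n$, contradicting $\|T(e_n)\|=1/\|Q\|$ together with $\|S-T\|<\varepsilon$ once $\varepsilon<1/\|Q\|$. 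In the real case I would take $Q(x)=(x(1)^{N-1}x(i))_i$, set $P:=\|Q\|S$, and reproduce the real-case computation of Proposition~\ref{no vale BP}: form the norm-one $M$-homogeneous polynomial $q_{\overline P,a}$ on $\ell_M$ associated to the signs of $\overline P(a)$, so that $q_{\overline P,a}\circ\overline P$ and $q_{\overline P,a}\circ\overline Q$ are $NM$-homogeneous polynomials on $d^*(w,1)$ with $q_{\overline P,a}\circ\overline P$ attaining its norm at $a$ with positive value; use the equicontinuity of norm-one members of $\mathcal P(^M\ell_M)$ (applied now to $\|\overline P-\overline Q\|=\|P-Q\|<\|Q\|\varepsilon$) to control the symmetric $NM$-linear forms; apply Lemma~\ref{generalizacion lema} (legitimate because $a$ is extremal) in place of Lemma~\ref{ppio mod max real}; and run the $M$ even / $M$ odd (and, for $M$ odd, $N$ even / odd) dichotomy exactly as there. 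The only change is that~\eqref{eq-equicont} becomes an estimate of the form $\le(\text{const})\,\varepsilon$, which still forces $\|Q\|=0$ on letting $\varepsilon\to0$, a contradiction.

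For part~(a), the complex case is identical to the complex case of~(b), with the scalar polynomial $Q(x)=\sum_i x(i)^N$ — well defined since $w\in\ell_N$ (as $N\ge M$) — and the (strictly convex) scalar field in place of $\ell_M$. The real case is the genuinely new point. There I would take $Q(x)=x(1)^{N-M}\sum_i x(i)^M$, a continuous $N$-homogeneous polynomial because $\sum_i x(i)^M$ is continuous on $d_*(w,1)$ by~\eqref{inclusiones}, with $\|Q\|\ge Q(e_1)=1$, and set $T=Q/\|Q\|$. Choosing $\tilde x\in S_{d_*(w,1)}$ with $|Q(\tilde x)|>(1-\eta(\varepsilon))\|Q\|$ and replacing it by $(|\tilde x(i)|)_i$ — which has the same Lorentz norm and makes $Q$ take the value $|\tilde x(1)|^{N-M}\sum_i|\tilde x(i)|^M\ge|Q(\tilde x_{\mathrm{old}})|$ — one may assume $\tilde x(i)\ge0$ for all $i$ and $Q(\tilde x)>0$. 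For $\varepsilon$ small, $\overline S(a)$ is then close to $Q(\tilde x)/\|Q\|>1-\eta(\varepsilon)$, hence positive, hence equal to $1$; so $P:=\|Q\|S$ has $\overline P(a)=\|Q\|>0$ and $\|P-Q\|<\|Q\|\varepsilon$. Now Lemma~\ref{generalizacion lema}(i) applied to $\overline P\in\mathcal P(^Nd^*(w,1))$ (licit: $w\in\ell_N$, $N\ge M\ge2$, $a$ extremal, $\overline P(a)>0$) gives $\limsup_n\phi(a,\ldots,a,e_n,\overset{M}{\dots},e_n)\le0$ for the symmetric $N$-linear form $\phi$ of $\overline P$. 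A short computation with the symmetric form of a product — all summands over $M$-element subsets vanish except the one gathering all the $e_n$'s, because $e_n(1)=0$ for $n\ge2$ — shows that the symmetric form $\psi$ of $\overline Q$ satisfies $\psi(a,\ldots,a,e_n,\overset{M}{\dots},e_n)=a(1)^{N-M}/\binom{N}{M}$ for $n\ge2$; with the polarization bound $\|\phi-\psi\|\le\frac{N^N}{N!}\|P-Q\|$ this forces $a(1)^{N-M}\le\binom{N}{M}\frac{N^N}{N!}\|P-Q\|$. If $N-M$ is even, or $N-M$ is odd and $a(1)\ge0$, this bounds $|a(1)|^{N-M}$; if $N-M$ is odd and $a(1)<0$ then, since $\tilde x(1)\ge0$, $|a(1)|=-a(1)\le\tilde x(1)-a(1)=|a(1)-\tilde x(1)|\le\|a-\tilde x\|_W<\beta(\varepsilon)$ does the same. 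In every case $|a(1)|^{N-M}\to0$ as $\varepsilon\to0$, so from $\|Q\|=\overline P(a)\le|\overline Q(a)|+\|P-Q\|\le C^M|a(1)|^{N-M}+\|P-Q\|$ (with $C$ the norm of the inclusion $d^*(w,1)\hookrightarrow\ell_M$, finite by~\eqref{inclusiones}) one obtains $\|Q\|=0$ as $\varepsilon\to0$, contradicting $\|Q\|\ge1$.

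The main obstacle will be this real scalar case. The complex cases are soft because the maximum-modulus argument behind Lemma~\ref{lema extremales poli} delivers the hard conclusion $\overline S(e_n)=0$ outright, and the real case of part~(b) is in essence already carried out in Proposition~\ref{no vale BP}. In the real scalar case one must instead work with the weaker sign conclusion of Lemma~\ref{generalizacion lema}, and the delicate interplay is: using the sign-flip invariance of the Lorentz norm to arrange simultaneously $Q(\tilde x)>0$ (so that Lemma~\ref{generalizacion lema}(i) applies with the correct sign) and $\tilde x(i)\ge0$; carrying out the symmetric-form-of-a-product computation; and then using the Bollob\'as closeness $\|a-\tilde x\|_W<\beta(\varepsilon)$ to absorb the parity obstruction when $N-M$ is odd. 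None of these steps is deep, but combining them and tracking the quantities that must tend to $0$ with $\varepsilon$ is the one place where the argument is more than a transcription of Proposition~\ref{no vale BP}.
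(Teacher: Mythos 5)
Your proposal is correct, and three of its four branches (the two complex cases and the real case of (b)) follow the paper's proof essentially verbatim: Lemma~\ref{lema 2} upgrades the Bollob\'as proximity $\Vert a-\tilde x\Vert_W<\beta(\varepsilon)$ to condition~\eqref{propiedad extremal}, after which Lemma~\ref{lema extremales poli}, respectively the scheme of Proposition~\ref{no vale BP} with Lemma~\ref{generalizacion lema} in place of Lemma~\ref{ppio mod max real}, applies to $\overline S$. Where you genuinely diverge is the real scalar case of (a). The paper takes the \emph{alternating} polynomial $q(x)=x(1)^{N-M}\sum_i(-1)^ix(i)^M$, so that $\binom{N}{M}\psi(a,\dots,a,e_n,\overset{M}{\dots},e_n)=(-1)^na(1)^{N-M}$ has $\limsup$ (resp.\ $\liminf$) equal to $\pm\vert a(1)\vert^{N-M}$ automatically; comparing with Lemma~\ref{generalizacion lema}(i) or (ii) then yields $\vert a(1)\vert^{N-M}\le\binom{N}{M}\varepsilon$ with no case analysis on the sign of $a(1)$, the parity of $N-M$, or the sign of $\overline p(a)$. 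You instead use the non-alternating $Q(x)=x(1)^{N-M}\sum_ix(i)^M$ and recover the missing sign information from the Bollob\'as hypothesis itself: normalizing $\tilde x$ to be coordinatewise nonnegative forces $\overline S(a)=1>0$ (so only part (i) of the lemma is needed) and, when $N-M$ is odd and $a(1)<0$, the bound $\vert a(1)\vert\le\vert a(1)-\tilde x(1)\vert\le\Vert a-\tilde x\Vert_W<\beta(\varepsilon)$ closes the gap. Both arguments are sound; the paper's sign trick is slicker and sign-blind, while yours makes essential extra use of the proximity $\Vert a-\tilde x\Vert_W<\beta(\varepsilon)$, which is legitimately part of the hypothesis being contradicted. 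One small repair: your closing claim that ``in every case $\vert a(1)\vert^{N-M}\to0$'' fails when $N=M$ (the quantity is identically $1$), but there the inequality $1=a(1)^{N-M}\le\binom{N}{M}\frac{N^N}{N!}\Vert Q\Vert\varepsilon$ is already the desired contradiction for $\varepsilon$ small, so nothing is lost.
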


\begin{proof}
(a) \textit{The complex case.} Since $w \in \ell_M$, for any $N \geq M$ we can define $q \in \mathcal{P}(^Nd_*(w,1))$  by $q(x) = \sum_{i=1}^\infty x(i)^N$. The result is obtained by contradiction proceeding as in Proposition~\ref{no vale LB multilineal}.

\textit{The real case.}
Suppose that the Lindenstrauss-Bollob\'as theorem holds and  define $q \in \mathcal{P}(^Nd_*(w,1))$ by
$$
q(x) = x(1)^{N-M}\sum_{i=1}^\infty (-1)^i x(i)^M.
$$
Given $0<\varepsilon<1$, $\eta(\varepsilon)$ and $\beta(\varepsilon)$ as in the definition, take $\tilde x  \in B_{d_*(w,1)}$ such that $\vert q(\tilde x )\vert > \Vert q\Vert - \eta(\varepsilon)$. Then there exist $p \in \mathcal{P}(^Nd_*(w,1))$ and $a \in B_{d^*(w,1)}$ such that,
$$
\vert \overline{p}(a)\vert = \Vert \overline{p}\Vert= \Vert {p}\Vert, \quad \Vert a - \tilde x \Vert_W < \beta(\varepsilon) \quad \text{and} \quad \Vert p-q\Vert < \varepsilon^2.
$$
Let $\phi$ and $\psi$ be the symmetric $N$-linear forms associated to $\overline{p}$ and $\overline {q}$, respectively. By Lemma~\ref{lema 2}, we may choose  $\varepsilon$ sufficiently small so that $a$ satisfies \eqref{propiedad extremal} and $\|\phi - \psi\| < \varepsilon$.

If $\overline{p}(a) >0$,
Lemma~\ref{generalizacion lema} (i) implies that
$$
\limsup_n \phi(a,\ldots,a,e_n,\overset M \ldots, e_n) \leq 0.
$$
On the other hand, it is easy to see that
$\binom{N}{M} \psi(a,\ldots,a,e_n,\overset M \ldots, e_n) = (-1)^n a(1)^{N-M}$ and then
$$
\limsup_n \textstyle{\binom{N}{M}} \psi(a,\ldots,a,e_n,\overset M \ldots, e_n) = \vert a(1)\vert^{N-M}.
$$
Now, we proceed as in Proposition~\ref{no vale BP} and obtain $\|q\|=0$, which is a contradiction.

If $\overline{p}(a) <0$ the result follows using Lemma~\ref{generalizacion lema} (ii).

(b) \textit{The complex case}.
Note that $Q(x) = (x(i)^N)_{i \in \mathbb{N}}$  defines an element in
$\mathcal{P}(^Nd_*(w,1);\ell_M)$. Now, we proceed as in the Proposition~\ref{no vale LB multilineal}.

\textit{The real case}. Following the lines of the real case in (a), we can proceed as in the proof of Proposition~\ref{no vale BP} combining Lemma~\ref{lema 2} and Lemma~\ref{generalizacion lema}.
\end{proof}

As we did in the previous section, we may consider a Banach space $Z$ obtained by a renorming of $c_0$ so that $Z''$ is strictly convex. Then,
for any admissible sequence $w$ and any $N\in\N$,  the proof given in Proposition~\ref{no vale LB multilineal}~(b), remains true  for $\mathcal{L}(^Nd_*(w,1);Z)$.
The same happens with the proof of the complex case of Proposition~\ref{no vale LB polinomial}~(b) for  $\mathcal{P}(^Nd_*(w,1);Z)$.

\subsection*{Acknowledgements} We wish to thank Sunday Garc\'ia and Manolo Maestre for helpful conversations and comments. The third author is grateful to the Departamento  de An\'alisis Matem\'atico, Universitat de Valencia and its members for their hospitality during his visit in February 2012.

\end{document}